\let\csname equation*\endcsname\relax
\let\csname endequation*\endcsname\relax
\newtheorem{theorem}{Theorem}[section]
\newtheorem{lemma}[theorem]{Lemma}
\newtheorem{remark}[theorem]{Remark}
\DeclareMathOperator*{\argmin}{argmin}
\begin{document}

\title[Projected Newton Method]{Sequential Projected Newton method for regularization of nonlinear least squares problems}

\author{J Cornelis \& W Vanroose}

\address{Department of Mathematics, University of Antwerp, 2020 Antwerp, Belgium}
\ead{jeffrey.cornelis@uantwerp.be}
\vspace{10pt}
\begin{indented}
\item[]\today
\end{indented}

\begin{abstract}
We develop a computationally efficient algorithm for the automatic regularization of nonlinear inverse problems based on the discrepancy principle. We formulate the problem as an equality constrained optimization problem, where the constraint is given by a least squares data fidelity term and expresses the discrepancy principle. The objective function is a convex regularization function that incorporates some prior knowledge, such as the total variation regularization function. Using the Jacobian matrix of the nonlinear forward model, we consider a sequence of quadratically constrained optimization problems that can all be solved using the Projected Newton method. We show that the solution of such a quadratically constrained sub-problem results in a descent direction for an exact merit function. This merit function can then be used to describe a formal line-search method. We also formulate a slightly more heuristic approach that simplifies the algorithm and allows for an inexact solution of the sequence of sub-problems. We illustrate the behavior of the algorithm using a number of numerical experiments, with Talbot-Lau X-ray phase contrast imaging as the main application. The numerical experiments confirm that the quadratically constrained sub-problems need not be solved with high accuracy in early iterations to make sufficient progress towards the solution.
In addition, we show that the proposed method is able to produce reconstructions of similar quality compared to other state-of-the-art approaches with a significant reduction in computational time.
\end{abstract}

\vspace{2pc}
\noindent{\it Keywords}: Projected Newton method, inverse problem, regularization, constrained optimization, Talbot-Lau phase contrast imaging

\section{Introduction}

In this manuscript we consider the regularized nonlinear least squares problem
\begin{equation} \label{eq:ireg1}
\min_{x\in\mathbb{R}^n} \Psi(x) \hspace{0.5cm}\text{subject to}\hspace{0.5cm}\frac{1}{2}||s(x) - b||^2 \leq \frac{\sigma^2}{2}
\end{equation}
where $\Psi: \mathbb{R}^n \rightarrow \mathbb{R}$ is a convex twice continuously differentiable function. The function $s :\mathbb{R}^n \rightarrow \mathbb{R}^m$ with $m\geq n$ models some nonlinear forward operation and $b\in\mathbb{R}^m$ is some data contaminated with measurement errors or noise. Here, $\sigma$ is an estimate of the noise-level, i.e. $\sigma \approx ||b_{ex} - b||$, for some ``exact'' data $b_{ex}$. In the following we assume that the unconstrained solution of the least squares problem is below the noise-level, i.e. that $\min_{x\in\mathbb{R}^n} \|s(x) - b\| \leq \sigma$. This implies that the constraint in \cref{eq:ireg1} is feasible and thus there exists at least one local solution. 

Furthermore, we assume $s(x)$ has component functions $s_i(x):\mathbb{R}^n \rightarrow \mathbb{R}$ for $i=1,\ldots,m$ that are all continuously differentiable, such that the Jacobian matrix 
\begin{equation*}
J(x) = \left( \frac{\partial s_i(x)}{\partial x_j} \right)_{i,j} = \begin{pmatrix} \frac{\partial s_1(x)}{\partial x_1} &  \ldots & \frac{\partial s_1(x)}{\partial x_n} \\
\vdots & \ddots & \vdots \\
\frac{\partial s_m(x)}{\partial x_1} &  \ldots & \frac{\partial s_m(x)}{\partial x_n}
\end{pmatrix} \in \mathbb{R}^{m \times n}
\end{equation*}
is well-defined. Let us denote the constraint function as $c(x) = \frac{1}{2}||s(x) - b||^2 - \frac{\sigma^2}{2}$. It is an easy calculation to verify that the gradient of the constraint function is given by $\nabla c(x) = J(x)^T(s(x) - b)$. Note that the constraint function $c(x)$ is not necessarily convex. 

In most cases of interest we can show that the inequality constraint in \cref{eq:ireg1} becomes active for the solution $x^{*}$ and can thus be replaced by an equality constraint. 

\begin{lemma}\label{thm:unique_lag}

Suppose $x^*$ is a local solution of \cref{eq:ireg1} with $\nabla \Psi(x^*) \neq 0$ and $\nabla c(x^*) \neq 0$. Then $||s(x^*) - b|| = \sigma$ and there exists a unique Lagrange multiplier $\lambda^*>0$ such that 

\begin{equation}\label{eq:kkt1}
\left\{ \begin{array}{l}\nabla \Psi(x^{*}) + \lambda^{*}\nabla c(x^{*}) = 0,   \\
c(x^{*}) = 0. \end{array} \right. 
\end{equation}

\end{lemma}

\begin{proof}
The linear independence constraint qualification (LICQ) \cite{nocedal2006numerical} trivially holds for $x^{*}$ since we only have one constraint and we assume that $\nabla c(x^*) \neq 0$ \footnote{LICQ is an assumption that ensures that the Karush-Kuhn-Tucker conditions are necessary conditions for local optimality of a constrained optimization problem, see definition 12.4 and theorem 12.1 in \cite{nocedal2006numerical}.}. Hence, there exists a Lagrange multiplier $\lambda^{*}\in\mathbb{R}$ such that the Karuhn-Kush-Tucker or KKT conditions are satisfied :

\begin{equation*}
\left\{ \begin{array}{l}\nabla \Psi(x^*) + \lambda^* \nabla c(x^*) = 0, \\
\lambda^* c(x^*) = 0 \\
c(x^*)\leq 0,\hspace{0.2cm} \lambda^*\geq 0.  \end{array}\right. 
\end{equation*}

Suppose that $||s(x^*) - b||\neq \sigma$, i.e. $c(x^{*})\neq0$, then it follows from the complementarity condition that $\lambda^* = 0$. This is turn means that $\nabla \Psi(x^{*})= 0$ which contradicts our assumption that $\nabla \Psi(x^*) \neq 0$. Hence, it follows that $\lambda^* > 0$ and $||s(x^*) - b|| = \sigma$ and that $(x^{*},\lambda^*)$ satisfies \eqref{eq:kkt1}. Uniqueness of the Lagrange multiplier easily follows from the fact $\nabla c(x^*) \neq 0$. 
\end{proof}
The conditions in \cref{thm:unique_lag} are very mild since $\nabla \Psi(x^{*})= 0$ means that $x^*$ is an unconstrained minimizer of the convex regularization function.  Similarly we have that $\nabla c(x^*)=0$ expresses the first order necessary optimality condition of the unconstrained optimization problem $\argmin ||s(x) - b||$.

Note that the equations in \eqref{eq:kkt1} are precisely the KKT conditions of the equality constrained problem
\begin{equation} \label{eq:reg1}
\min_{x\in\mathbb{R}^n} \Psi(x) \hspace{0.5cm}\text{subject to}\hspace{0.5cm}\frac{1}{2}||s(x) - b||^2 = \frac{\sigma^2}{2}.
\end{equation}
Here, the constraint expresses the discrepancy principle \cite{hansen2010discrete}, which basically says that a point $x$ that satisfies $||s(x) - b||<\sigma$ is expected to be `over-fitted' to the noisy data $b$. Among all possible solutions $x$ that fit data $b$ `well enough' without being over-fitted, the one that minimizes the regularization function $\Psi(x)$ is chosen. 

The equality constrained optimization problem \cref{eq:reg1} is closely related to the (unconstrained) regularized nonlinear least squares problem
\begin{equation} \label{eq:reg_alpha}
\min_{x\in\mathbb{R}^n} \frac{1}{2} \| s(x) - b\|^2 + \alpha\Psi(x),
\end{equation}
with fixed regularization parameter $\alpha>0$. The first order optimality conditions of \cref{eq:reg_alpha} are given by
\begin{equation*}
\underbrace{J(x^*)^T(s(x^*) - b)}_{=\nabla c(x^*)} + \alpha \nabla \Psi(x^*) = 0. 
\end{equation*}
Any point $x^*$ that satisfies these equations also satisfies the first equation of the KKT conditions \cref{eq:kkt1} of the equality constrained optimization problem \cref{eq:reg1} with $\lambda^* = 1/\alpha$. Hence, solving \cref{eq:reg1} can be seen as an approach to simultaneously solve the regularized nonlinear least squares problem \cref{eq:reg_alpha} and find the corresponding regularization parameter $\alpha$ such that the discrepancy principle is satisfied. Other well-known parameter choice methods include generalized cross validation and the L-curve criterion. We refer to \cite{hansen2010discrete} for more general information on the different parameter choice methods. 

In this work we consider a novel approach to compute a solution $(x^{*},\lambda^{*})$ to the nonlinear system of equations \cref{eq:kkt1} by solving a sequence of sub-problems, by using the linear approximation $s(x + p)\approx s(x) + J(x)p$  for the constraint in \cref{eq:reg1} for some $x\in \mathbb{R}^n$. By Taylor's theorem we have
\begin{equation*}
s(x + p) = s(x) + \int_0^1 J(x + tp)p dt. 
\end{equation*}
Hence it follows that 
\begin{align}
||s(x) + J(x)p - s(x + p)|| &= || J(x)p - \int_0^1 J(x + tp)p dt || \notag \\
&= ||\int_0^1 \left(J(x) - J(x + tp)\right)  p dt || \notag \\ 
&\leq ||p|| \max_{t\in[0,1]} || J(x) - J(x + tp)|| =  o(||p||). \label{eq:littleo}
\end{align}
When $J(x)$ is Lipschitz continuous the bound is replaced by $\mathcal{O}(||p||^2)$. 

After some calculations it follows that we have the following equivalent expressions: 
\begin{equation*}
\frac{1}{2}||J(x)p  + s(x) - b||^2 = \frac{\sigma^2}{2} \Leftrightarrow c(x) + \nabla c(x)^T p + \frac{1}{2}p^T J(x)^T J(x)p = 0.
\end{equation*}
Hence it is clear that taking a linear approximation of the forward model $s(x + p)$ corresponds to taking a quadratic model of the constraint function $c(x + p)$. Alternatively we could have immediately considered a quadratic model of $c(x + p)$ using Taylor's theorem:
\begin{equation*}
c(x + p)\approx c(x) + \nabla c(x)^T p +  \frac{1}{2}p^T \nabla^2 c(x) p. 
\end{equation*}
However, since $c(x)$ is in general non-convex we have that the Hessian is possibly indefinite, which is undesirable for optimization algorithms.  Moreover, for a least squares function we know that the matrix $J(x)^TJ(x)$ is an approximation of the true Hessian $\nabla^2 c(x)$, see for instance chapter 10 in \cite{nocedal2006numerical}. This approximation is for instance used in the Gauss-Newton algorithm for solving a least squares minimization problem. 

The main idea of the method proposed in this manuscript is to solve a sequence of problems of the form
\begin{equation} \label{eq:reg2}
\min_{p\in\mathbb{R}^n} \Psi(x + p) \hspace{0.5cm}\text{subject to}\hspace{0.5cm}\frac{1}{2}||J(x)p  + s(x) - b||^2 = \frac{\sigma^2}{2}
\end{equation}
for a given iterate $x\in\mathbb{R}^n$.
The KKT conditions for this optimization problem are given by  
\begin{subequations} \label{eq:kkt2}
\begin{align}[left = \empheqlbrace\,]
& \nabla \Psi(x + p) + \lambda (\nabla c(x) + J(x)^T J(x) p) = 0, \label{eq:kkt2a}\\
& c(x) + \nabla c(x)^T p + \frac{1}{2}p^T J(x)^T J(x)p = 0, \label{eq:kkt2b}
\end{align}
\end{subequations}
where $\lambda\in\mathbb{R}$ is the Lagrange multiplier.

Alternatively, we could also consider a quadratic model of the objective function and solve a sequence of problems
\begin{equation}\label{eq:quadmodel}
\min_{p\in\mathbb{R}^n} \nabla \Psi(x)^T p + \frac{1}{2} p^T \nabla^2 \Psi(x) p \hspace{0.5cm}\text{subject to}\hspace{0.5cm}\frac{1}{2}||J(x)p  + s(x) - b||^2 = \frac{\sigma^2}{2}.
\end{equation}
This idea forms the basis of one of the reference methods that we consider in this manuscript. In this case we get an approach that closely resembles the one taken in \cite{fukushima2003sequential}, namely the Sequential Quadratically Constained Quadratic programming (SQCQP) method. The SQCQP method is slightly more general in the sense that they work with possibly multiple constraint functions $c_{i}(x):\mathbb{R}^n\rightarrow\mathbb{R}$ for $i=1,\ldots,\tilde{m}$. However, they assume convexity of these functions and then use the true Hessians $\nabla^2 c_{i}(x)$ in the quadratic model. With the numerical experiments, see \cref{sec:num}, it will become clear that in the context of the current manuscript it is better to solve a sequence of problems of the form \cref{eq:reg2}. Hence, we focus our presentation on this case. However, many of the results presented here can be modified in case we solve a sequence of problems \cref{eq:quadmodel}. 

A second reference method that we consider for the numerical experiments is based on the approximation $J(x)^TJ(x)\approx \nabla^2 c(x)$. The algorithm can be seen as a type of Gauss-Newton method and is able to solve the regularized nonlinear inverse problem with fixed regularization parameter $\alpha>0$ shown in \cref{eq:reg_alpha}. The linear systems of equations that appear in each iteration can be solved using the Conjugate Gradient method \cite{hestenes1952methods}.  

Note that even when \eqref{eq:kkt1} has a solution, this does not necessarily imply that \eqref{eq:kkt2} has a solution for all $x$. However, using the implicit function theorem, we can show that this sub-problem has a solution for all $x$ in a neighborhood of the solution $x^{*}$.
The implicit function theorem can be formulated as follows: 
\begin{theorem} \label{thm:IFT}
Let $\Phi:\mathbb{R}^{n}\times\mathbb{R}^{q}\rightarrow \mathbb{R}^q$ be a continuously differentiable function and let $\mathbb{R}^{n}\times\mathbb{R}^{q}$ have coordinates $(x,y)$. Suppose $(x^{*},y^{*})$ is a point satisfying $\Phi(x^{*},y^{*})=0$. If the Jacobian matrix of $\Phi$ with respect to $y$ is nonsingular in $(x^{*},y^{*})$, i.e. if the matrix $J_{\Phi}(x^{*},y^*) := \left(\frac{\partial \Phi_i (x^{*},y^{*})}{\partial y_j} \right)_{i,j} \in\mathbb{R}^{q \times q}$ is invertible, then there exists some open set $X\subset\mathbb{R}^n$ containing $x^{*}$ such that there exists a unique continuously differentiable function $g:X\rightarrow \mathbb{R}^{q}$ such that $g(x^{*}) = y^{*}$ and $\Phi(x,g(x)) = 0$ for all $x\in X$.  
\end{theorem}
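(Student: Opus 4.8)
The plan is to reduce the solvability of $\Phi(x,y)=0$ for $y$ near $y^*$ to a fixed-point problem to which the Banach contraction principle applies. Write $A := J_\Phi(x^*,y^*)$, which is invertible by hypothesis, and for each fixed $x$ define the map $T_x(y) = y - A^{-1}\Phi(x,y)$. A point $y$ is a fixed point of $T_x$ if and only if $\Phi(x,y)=0$, so it suffices to show that for every $x$ in a small neighbourhood of $x^*$ the map $T_x$ has a unique fixed point in a closed ball $\overline{B}(y^*,r)$, and that this fixed point depends continuously differentiably on $x$.

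First I would establish the contraction estimate. Since $\Phi$ is continuously differentiable, the partial Jacobian $\partial_y \Phi$ is continuous and equals $A$ at $(x^*,y^*)$; hence the derivative $D_y T_x(y) = I - A^{-1}\partial_y\Phi(x,y)$ can be made uniformly small, say $\|D_yT_x(y)\| \le \tfrac12$, on a sufficiently small product neighbourhood of $(x^*,y^*)$. By the mean value inequality this makes $T_x$ a $\tfrac12$-contraction in $y$ on $\overline{B}(y^*,r)$. To ensure $T_x$ maps this ball into itself, I would use that $T_{x^*}(y^*)=y^*$ together with continuity of $x\mapsto T_x(y^*)$ to shrink the $x$-neighbourhood $X$ so that $\|T_x(y^*)-y^*\| \le r/2$; combined with the contraction estimate this yields $\|T_x(y)-y^*\|\le r$ for all $y\in\overline{B}(y^*,r)$. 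The Banach fixed-point theorem then produces, for each $x\in X$, a unique $g(x)\in\overline{B}(y^*,r)$ with $\Phi(x,g(x))=0$ and $g(x^*)=y^*$, and the uniform contraction factor gives continuity of $g$ through the standard estimate $\|g(x_1)-g(x_2)\|\le 2\|T_{x_1}(g(x_2))-T_{x_2}(g(x_2))\|$.

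It remains to upgrade $g$ from continuous to continuously differentiable, which I expect to be the main obstacle. The candidate derivative is obtained by formally differentiating $\Phi(x,g(x))=0$, giving $Dg(x) = -\big(\partial_y\Phi(x,g(x))\big)^{-1}\partial_x\Phi(x,g(x))$, which is well defined after possibly shrinking $X$ so that $\partial_y\Phi$ stays invertible near $(x^*,y^*)$. To justify that this is genuinely the derivative I would insert the first-order Taylor expansion of $\Phi$ about $(x,g(x))$ into the identity $\Phi(x+h,g(x+h))=0$, solve for $g(x+h)-g(x)$, and use the continuity of $g$ already proved to control the remainder as $o(\|h\|)$. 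Continuity of $Dg$ then follows since it is a composition of the continuous maps $\partial_x\Phi$, $\partial_y\Phi$, matrix inversion, and $g$. As an alternative one-line route I would note that defining $F(x,y)=(x,\Phi(x,y))$ makes the full Jacobian block triangular with invertible diagonal blocks $I_n$ and $A$, so the Inverse Function Theorem applies to $F$ and the implicit function $g$ is read off directly from the local inverse; the trade-off is that this merely relocates the analytic difficulty into the proof of the Inverse Function Theorem.
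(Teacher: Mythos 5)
Your proposal is correct, but note that the paper does not actually prove this statement: \cref{thm:IFT} is the classical Implicit Function Theorem, and the paper's ``proof'' is only a pointer to the references \cite{dontchev2009implicit,fitzpatrick2009advanced}, which is the appropriate choice for a textbook result that is not a contribution of the paper. What you have written is essentially the classical contraction-mapping proof contained in those references: reduce $\Phi(x,y)=0$ to the fixed-point equation $y=T_x(y)$ with $T_x(y) = y - A^{-1}\Phi(x,y)$, check that $T_x$ is a uniform $\tfrac12$-contraction mapping a closed ball into itself, invoke Banach's fixed-point theorem to obtain $g$, and then upgrade continuity to $C^1$ via $Dg(x) = -\bigl(\partial_y\Phi(x,g(x))\bigr)^{-1}\partial_x\Phi(x,g(x))$. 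Your key estimates are right, including the self-mapping argument and the continuity bound $\|g(x_1)-g(x_2)\|\le 2\|T_{x_1}(g(x_2))-T_{x_2}(g(x_2))\|$. One refinement is needed in the differentiability step: to turn the Taylor remainder $o(\|h\|+\|g(x+h)-g(x)\|)$ into $o(\|h\|)$ you need $\|g(x+h)-g(x)\|=O(\|h\|)$, not merely continuity of $g$. This is not a gap in substance, because your own continuity estimate combined with the local Lipschitz continuity of $\Phi$ in $x$ (a consequence of $\Phi\in C^1$ on a compact neighbourhood) gives $\|g(x_1)-g(x_2)\|\le 2\|A^{-1}\|L\|x_1-x_2\|$, i.e. $g$ is locally Lipschitz, which is exactly what the remainder argument requires. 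Your alternative route through the Inverse Function Theorem applied to $F(x,y)=(x,\Phi(x,y))$, whose Jacobian is block triangular with invertible diagonal blocks, is equally standard; as you observe, it merely shifts the analytic work elsewhere, so which version ``buys'' more depends on what is taken as known.
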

\begin{proof} See for instance \cite{dontchev2009implicit,fitzpatrick2009advanced}.  
\end{proof}

\begin{lemma} \label{thm:exist}
Let $(x^*,\lambda^*)$ be a KKT point \cref{eq:kkt1}  with $\nabla c(x^*) \neq 0$ and suppose the matrix $\nabla^2 \Psi(x^*) + \lambda^*J(x^*)^TJ(x^{*})$ is positive definite. Then there exist an open open ball $\mathcal{B}_\rho(x^{*})$ with center $x^*$ and radius $\rho>0$ such that for all $x\in \mathcal{B}_\rho(x^{*})$ there exists a solution $(p,\lambda)$ for \cref{eq:kkt2} with $\lambda>0$. 
\end{lemma}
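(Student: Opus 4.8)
The plan is to apply the implicit function theorem \cref{thm:IFT} with $x$ playing the role of the parameter and $(p,\lambda)$ the role of the unknown $y$. First I would define $\Phi:\mathbb{R}^n\times\mathbb{R}^{n+1}\to\mathbb{R}^{n+1}$ by collecting the left-hand sides of \cref{eq:kkt2}, that is
\begin{equation*}
\Phi\big(x,(p,\lambda)\big) = \begin{pmatrix} \nabla\Psi(x+p) + \lambda\big(\nabla c(x) + J(x)^TJ(x)p\big) \\[2pt] c(x) + \nabla c(x)^Tp + \tfrac12 p^TJ(x)^TJ(x)p \end{pmatrix},
\end{equation*}
so that zeros of $\Phi$ are exactly the solutions of \cref{eq:kkt2}. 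Choosing the base point $(x^*,(0,\lambda^*))$, a direct substitution together with the KKT conditions \cref{eq:kkt1} shows $\Phi(x^*,(0,\lambda^*)) = 0$: the second component reduces to $c(x^*)=0$ and the first to $\nabla\Psi(x^*)+\lambda^*\nabla c(x^*)=0$.

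The key step is to verify that the Jacobian of $\Phi$ with respect to $(p,\lambda)$ is nonsingular at this base point. Differentiating, at $p=0$, $x=x^*$, $\lambda=\lambda^*$ this Jacobian takes the saddle-point form
\begin{equation*}
\begin{pmatrix} H^* & \nabla c(x^*) \\[2pt] \nabla c(x^*)^T & 0 \end{pmatrix}, \qquad H^* := \nabla^2\Psi(x^*) + \lambda^*J(x^*)^TJ(x^*),
\end{equation*}
where the $\lambda$-derivative of the first block evaluates to $\nabla c(x^*)$ (since $p=0$) and the $p$-derivative of the second block to $\nabla c(x^*)^T$. I expect this invertibility check to be the main, though standard, technical point. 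It can be settled by a short null-space argument: if $(u,v)$ lies in the kernel, then $H^*u = -v\,\nabla c(x^*)$ and $\nabla c(x^*)^Tu=0$; since $H^*$ is positive definite by hypothesis, solving for $u$ and substituting gives $v\,\nabla c(x^*)^T(H^*)^{-1}\nabla c(x^*)=0$, and because $(H^*)^{-1}$ is positive definite and $\nabla c(x^*)\neq 0$ the scalar factor is strictly positive, forcing $v=0$ and hence $u=0$.

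With the hypotheses of \cref{thm:IFT} in place, the theorem yields an open set $X\ni x^*$ and a continuously differentiable map $x\mapsto(p(x),\lambda(x))$ with $(p(x^*),\lambda(x^*))=(0,\lambda^*)$ solving \cref{eq:kkt2} for every $x\in X$. Finally I would produce the claimed ball and the sign of the multiplier: pick $\rho>0$ small enough that $\mathcal{B}_\rho(x^*)\subseteq X$, and then, using continuity of $\lambda(\cdot)$ together with $\lambda(x^*)=\lambda^*>0$ from \cref{thm:unique_lag}, shrink $\rho$ if necessary so that $\lambda(x)>0$ throughout $\mathcal{B}_\rho(x^*)$. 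The only genuine obstacle is the Jacobian nonsingularity above; the remaining steps are essentially bookkeeping.
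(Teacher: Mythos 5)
Your proof is correct and follows essentially the same route as the paper: the same map $\Phi$, the same base point $(x^*,(0,\lambda^*))$, the same application of the implicit function theorem, and the same continuity argument to secure $\lambda>0$ on a small ball. The only difference is that you spell out the null-space argument for the invertibility of the saddle-point matrix, a detail the paper simply asserts from positive definiteness of $\nabla^2\Psi(x^*)+\lambda^*J(x^*)^TJ(x^*)$ and $\nabla c(x^*)\neq 0$.
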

\begin{proof}
To prove this claim we apply the implicit function theorem to the function $\Phi: \mathbb{R}^{n}\times\mathbb{R}^{n+1} \rightarrow  \mathbb{R}^{n + 1}$ with
\begin{equation*}
\Phi(x,p,\lambda) = \begin{pmatrix} \nabla \Psi(x + p) + \lambda \nabla c(x) + \lambda J(x)^TJ(x)p \\ c(x) + \nabla c(x)^T p + \frac{1}{2}p^T J(x)^TJ(x)p\end{pmatrix}.
\end{equation*}
Obviously we have $\Phi(x^*,0,\lambda^*)= 0$ since $(x^{*},\lambda^*)$ solves \cref{eq:kkt1}. Moreover we have that the Jacobian of $\Phi(x,p,\lambda)$ with respect to $(p,\lambda)$ is given by
\begin{equation*}
J_{\Phi}(x,p,\lambda) =  \begin{pmatrix}
\nabla^2 \Psi(x + p) + \lambda J(x)^T J(x) & J(x)^TJ(x)p + \nabla c(x) \\  p^T J(x)^TJ(x) + \nabla c(x)^T & 0
\end{pmatrix}.
\end{equation*}
Evaluating this matrix in $(x^*,0,\lambda^*)$ gives us 
\begin{equation*}
J_{\Phi}(x^*,0,\lambda^*) =  \begin{pmatrix}
\nabla^2 \Psi(x^*) + \lambda^* J(x^*)^T J(x^*) & \nabla c(x^*) \\  \nabla c(x^*)^T & 0
\end{pmatrix}
\end{equation*}
which is non-singular since $\nabla^2 \Psi(x^*) + \lambda^*J(x^*)^TJ(x^*)$ is positive definite and $\nabla c(x^*)\neq 0$. Hence, from \cref{thm:IFT} it follows that there exists an open set $X\subset\mathbb{R}^n$ containing $x^{*}$ such that there exists a unique continuously differentiable function $g:X\rightarrow \mathbb{R}^{n + 1}$ such that $g(x^{*}) = (0,\lambda^*)$ and $\Phi(x,g(x)) = 0$ for all $x\in X$.  It is now clear that $g(x) = (p,\lambda)$ is a solution for \cref{eq:kkt2}. Furthermore, by continuity of $g(x)$ and the fact that $\lambda^*>0$ we can choose the any radius $\rho>0$ small enough such that $\mathcal{B}_\rho(x^{*}) \subset X$ and $\lambda>0$.
\end{proof}

The rest of the paper is organized as follows. In \cref{sec:deriv} we derive the Sequential Projected Newton method, which is based on (approximately) solving a sequence of problems of the form \cref{eq:reg2}. In \cref{sec:reference_methods} we give some comments on related work and describe two reference methods. Next, in \cref{sec:talbot}, we introduce the application that we consider for our numerical experiments, namely Talbot-Lau X-ray phase constrast imaging. In \cref{sec:num} we perform a number of experiments with the proposed algorithm and compare it with the two reference methods described in \cref{sec:reference_methods}. First, we show that the sub-problems \cref{eq:reg2} do not need to be solved with a high accuracy to make reasonable progress towards the solution. Next, we illustrate the fact that the Sequential Projected Newton method is significantly less computationally expensive compared to the two reference methods, while producing solutions of similar quality. Lastly this work is concluded and an outlook is given in \cref{sec:conclusion}.

\section{Derivation of the Sequential Projected Newton method} \label{sec:deriv}
In this section we describe a line-search strategy that can be used when solving a sequence of problems of the form \cref{eq:kkt2}. Let us first describe a good choice of initial point $x_0$ to start our algorithm. We know that there exists a point $x_0$ that satisfies $||s(x_0) - b||\leq \sigma$, since we assumed that the unconstrained solution is below the noise-level. Moreover, such a point can be computed very easily by performing just a few Gauss-Newton iterations applied to $\argmin_{x\in\mathbb{R}^n}||s(x) - b||$. For such a point we know that the linearized inequality constraint $||J(x_0)p+ s(x_0) - b||\leq \sigma$ is feasible, since $p=0$ satisfies the inequality. From this it follows that the equality constraint $||J(x_0)p + s(x_0) - b||=\sigma$ is also feasible, since the constraint can be seen as a convex paraboloid. This means that the nonlinear system of equations \cref{eq:kkt2} has a solution for $x = x_0$ and hence we can use this as our initial point. See \cref{thm:initialpoint} for more details on how we can compute such a point. 

Let us denote $ [ \cdot ]_{+} = \max\{0,\cdot\}$. It is well known that the following function is an exact merit function for the optimization problem \cref{eq:ireg1}: 
\begin{equation*}
F_r(x) = \Psi(x) + r [c(x)]_+ . 
\end{equation*}
The authors in \cite{fukushima2003sequential} use this merit function in proving global convergence of the SQCQP method. The analysis in this section in inspired by this work. 
 
In this section we show that if we choose the penalty parameter $r>0$ large enough, we can then always find a step-length $\beta\in (0,1]$ such that $F_r(x + \beta p) < F_r(x)$. 
We start by proving the following lemma: 
\begin{lemma}\label{thm:lem}
For all $x\in\mathcal{B}_\rho(x^*)$ there exists a constant $\phi>0$ such that
\begin{equation*}
\Psi(x + \beta p) - \Psi(x) \leq - \frac{\beta\lambda}{2} p^T J(x)^TJ(x)p + \lambda \beta c(x) + \phi \beta^2 ||p||^2
\end{equation*}
where $(p,\lambda)$ is the KKT pair \cref{eq:kkt2}. 
\end{lemma}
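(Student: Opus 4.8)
The plan is to start from a second-order Taylor expansion of $\Psi$ along the step direction and then eliminate the first-order term by means of the two KKT equations \cref{eq:kkt2a,eq:kkt2b}. Since $\Psi$ is twice continuously differentiable, Taylor's theorem with the Lagrange form of the remainder gives
\begin{equation*}
\Psi(x + \beta p) - \Psi(x) = \beta \nabla \Psi(x)^T p + \frac{\beta^2}{2} p^T \nabla^2 \Psi(\xi) p
\end{equation*}
for some $\xi$ on the segment between $x$ and $x + \beta p$. The immediate difficulty is that the first KKT equation \cref{eq:kkt2a} controls $\nabla \Psi(x + p)$ rather than $\nabla \Psi(x)$, so I first need to pass from the gradient at $x$ to the gradient at $x + p$.

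To do this I would invoke convexity of $\Psi$. Monotonicity of the gradient of a convex function gives $(\nabla \Psi(x+p) - \nabla \Psi(x))^T p \geq 0$, hence $\nabla \Psi(x)^T p \leq \nabla \Psi(x+p)^T p$. Substituting this into the Taylor expansion turns the equality into
\begin{equation*}
\Psi(x + \beta p) - \Psi(x) \leq \beta \nabla \Psi(x+p)^T p + \frac{\beta^2}{2} p^T \nabla^2 \Psi(\xi) p .
\end{equation*}
(A purely convex argument that integrates $t \mapsto \nabla \Psi(x+tp)^T p$ over $[0,\beta]$ would in fact drop the quadratic remainder altogether, but the Taylor route produces exactly the stated form with the $\phi \beta^2 ||p||^2$ term.)

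Next I would substitute the KKT conditions. From \cref{eq:kkt2a} we have $\nabla \Psi(x+p) = -\lambda(\nabla c(x) + J(x)^T J(x) p)$, so that $\nabla \Psi(x+p)^T p = -\lambda \nabla c(x)^T p - \lambda p^T J(x)^T J(x) p$; and from \cref{eq:kkt2b} we have $\nabla c(x)^T p = -c(x) - \frac{1}{2} p^T J(x)^T J(x) p$. Inserting the second identity into the first and collecting terms yields $\beta \nabla \Psi(x+p)^T p = \lambda \beta c(x) - \frac{\beta \lambda}{2} p^T J(x)^T J(x) p$, which reproduces the first two terms of the claimed bound. This is a routine algebraic simplification.

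It remains to control the remainder term, and this is where the constant $\phi$ and the \emph{main obstacle} lie. I need a bound $\frac{1}{2} p^T \nabla^2 \Psi(\xi) p \leq \phi ||p||^2$ that is uniform over all $x \in \mathcal{B}_\rho(x^*)$ and all $\beta \in (0,1]$. The point $\xi$ lies on the segment $[x, x+\beta p]$, so I would first argue that this segment stays in a fixed compact set: by \cref{thm:exist} the step $p = p(x)$ is a continuous function of $x$, so after possibly shrinking $\rho$ so that $\overline{\mathcal{B}_\rho(x^*)} \subset X$ is compact, continuity of the solution map yields a uniform bound $||p|| \leq M$, and hence $||\xi - x^*|| \leq \rho + M$ for all admissible $\xi$. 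Since $\nabla^2 \Psi$ is continuous, its spectral norm attains a finite maximum on this compact neighbourhood of $x^*$, and taking $\phi$ to be half of that maximum (or any larger constant) gives the required uniform estimate and completes the proof. Everything apart from this compactness/uniformity step is substitution and convexity.
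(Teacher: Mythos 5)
Your proof is correct and follows essentially the same route as the paper's: a second-order Taylor expansion of $\Psi$ at $x$, the convexity inequality $\nabla\Psi(x)^T p \leq \nabla\Psi(x+p)^T p$ (which the paper obtains via the integral representation of the gradient and positive semi-definiteness of $\nabla^2\Psi$), and then substitution of both KKT equations to produce the terms $\lambda\beta c(x) - \frac{\beta\lambda}{2}p^T J(x)^T J(x)p$. The only difference is that you do extra (unneeded) work making $\phi$ uniform over all $x\in\mathcal{B}_\rho(x^*)$ via compactness and continuity of the solution map; since the lemma's quantifier order is ``for all $x$ there exists $\phi$'', the paper simply takes $\phi = \max_{t\in(0,1)}\frac{1}{2}\left\|\nabla^2\Psi(x+tp)\right\|$, a finite constant that may depend on $x$ and $p$ but not on $\beta$.
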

\begin{proof}
By Taylor's theorem we have that there exists some $t_\beta\in(0,1)$ such that 
\begin{equation*}
\Psi(x + \beta p) = \Psi(x) + \beta \nabla\Psi(x)^T p + \frac{\beta^2}{2}p^T \nabla^2 \Psi(x + t_\beta \beta p) p
\end{equation*}
and thus we have for all $\beta$ that 
\begin{equation}\label{eq:lem1}
\Psi(x + \beta p) - \Psi(x) \leq \beta \nabla\Psi(x)^T p + \phi \beta^2 ||p||^2
\end{equation} 
with $\phi = \max_{t\in(0,1)} \frac{1}{2}\|\nabla^2 \Psi(x + t p)\|$.
Moreover, we also have
\begin{equation*}
\nabla \Psi(x + p) = \nabla \Psi(x) + \int_{0}^1 \nabla^2 \Psi(x + tp)pdt
\end{equation*} 
which implies $\nabla \Psi(x)^Tp \leq \nabla \Psi(x + p)^T p$ since $\nabla^2 \Psi(x)$ is positive semi-definite. 
Using this inequality together with \cref{eq:lem1} and the KKT conditions \cref{eq:kkt2}, we get
\begin{align*}
\Psi(x + \beta p) - \Psi(x)  &\leq  \beta \nabla\Psi(x + p)^T p + \phi \beta^2 ||p||^2 \\
&= \lambda\beta \left(- \nabla c(x)^T p - p^TJ(x)^T J(x)p\right) + \phi \beta^2 ||p||^2 \\
&= \lambda \beta c(x) -\frac{\beta\lambda}{2} p^T J(x)^TJ(x)p + \phi \beta^2 ||p||^2.
\end{align*}
\end{proof}

\begin{lemma}\label{thm:descent}
Let $x\in\mathcal{B}_\rho(x^*)$ with corresponding KKT pair $(p,\lambda)$ that satisfies \cref{eq:kkt2} and $r>\lambda$. Let $0<\eta<1$, then we have for all $\beta$ sufficiently small
\begin{equation} \label{eq:descent}
F_r(x + \beta p) - F_r(x) \leq - \frac{\eta\lambda\beta}{2}  p^T  J(x)^T J(x) p.
\end{equation}
    
\end{lemma}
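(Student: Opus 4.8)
The plan is to split the merit-function difference into its two constituent pieces,
\[
F_r(x+\beta p) - F_r(x) = \big(\Psi(x+\beta p)-\Psi(x)\big) + r\big([c(x+\beta p)]_+ - [c(x)]_+\big),
\]
and to control each separately. The first piece is already handled by \cref{thm:lem}, which bounds it by $-\tfrac{\beta\lambda}{2}\,p^TJ(x)^TJ(x)p + \lambda\beta c(x) + \phi\beta^2\|p\|^2$. So the genuinely new work is to estimate the change in the non-smooth penalty term $[c(x+\beta p)]_+-[c(x)]_+$, after which I would simply assemble the two bounds and use the hypotheses $r>\lambda$ and $\eta<1$ to absorb the leftover terms.

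For the penalty term I would first Taylor-expand the (smooth) constraint along the direction $p$: since $c$ is continuously differentiable, $c(x+\beta p) = c(x) + \beta\,\nabla c(x)^Tp + o(\beta)$. The crucial input is the second KKT equation \cref{eq:kkt2b}, which gives $\nabla c(x)^Tp = -c(x) - \tfrac12\, p^TJ(x)^TJ(x)p$; substituting yields
\[
c(x+\beta p) = (1-\beta)\,c(x) + \beta\Big(-\tfrac12\, p^TJ(x)^TJ(x)p\Big) + o(\beta).
\]
The key observation is that, for $\beta\in(0,1)$, the leading part is a convex combination of $c(x)$ and the nonpositive number $-\tfrac12\, p^TJ(x)^TJ(x)p$. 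Since $[\cdot]_+$ is convex, monotone and $1$-Lipschitz, this gives $[c(x+\beta p)]_+ \le (1-\beta)[c(x)]_+ + \beta\big[-\tfrac12 p^TJ(x)^TJ(x)p\big]_+ + o(\beta) = (1-\beta)[c(x)]_+ + o(\beta)$, i.e.
\[
[c(x+\beta p)]_+ - [c(x)]_+ \le -\beta[c(x)]_+ + o(\beta).
\]
This single inequality replaces any explicit case analysis on the sign of $c(x)$.

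Finally, I would combine the two estimates. Writing $M = p^TJ(x)^TJ(x)p$, multiplying the penalty bound by $r$ and adding \cref{thm:lem} gives $F_r(x+\beta p)-F_r(x) \le -\tfrac{\beta\lambda}{2}M + \lambda\beta c(x) - r\beta[c(x)]_+ + \phi\beta^2\|p\|^2 + r\,o(\beta)$. Using $c(x)\le[c(x)]_+$ and $\lambda>0$ yields $\lambda\beta c(x) - r\beta[c(x)]_+ \le (\lambda-r)\beta[c(x)]_+ \le 0$, since $r>\lambda$ and $[c(x)]_+\ge0$; this removes both constraint terms and leaves $-\tfrac{\beta\lambda}{2}M + \phi\beta^2\|p\|^2 + r\,o(\beta)$. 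Because $\eta<1$ there is strict slack in the target, $-\tfrac{\beta\lambda}{2}M = -\tfrac{\eta\lambda\beta}{2}M - \tfrac{(1-\eta)\lambda\beta}{2}M$, so it remains to check that the higher-order remainder $\phi\beta^2\|p\|^2 + r\,o(\beta)$ is dominated by $\tfrac{(1-\eta)\lambda\beta}{2}M$; dividing by $\beta$ and letting $\beta\to0^+$, the remainder tends to $0$ while the slack is a fixed positive multiple of $M$, so \cref{eq:descent} holds for all sufficiently small $\beta$.

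The main obstacle I anticipate is handling the non-smoothness of the merit function through $[\cdot]_+$; the convexity/Lipschitz argument above is what makes this clean and avoids a messy split into the cases $c(x)>0$, $c(x)=0$, $c(x)<0$. A secondary point to watch is the degenerate situation $M=p^TJ(x)^TJ(x)p=0$, in which the target right-hand side vanishes and the slack disappears; here one must argue separately, since for a genuine search direction of interest $p^TJ(x)^TJ(x)p>0$ (making the descent strict) whereas otherwise the $o(\beta)$ remainder need not be nonpositive.
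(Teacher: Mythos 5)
Your proposal is correct and follows essentially the same route as the paper's own proof: decompose $F_r$ into the $\Psi$ part (handled by \cref{thm:lem}) and the penalty part, use the second KKT equation \cref{eq:kkt2b} to obtain $[c(x+\beta p)]_+-[c(x)]_+\le-\beta[c(x)]_++\mathcal{O}(\beta^2)$, absorb the constraint terms via $c(x)\le[c(x)]_+$ and $r>\lambda$, and conclude for small $\beta$; your convexity/Lipschitz treatment of $[\cdot]_+$ and the $o(\beta)$ remainder are only cosmetic variants of the paper's monotonicity argument and explicit $\phi_2\beta^2\|p\|^2$ bound. The degenerate case $p^TJ(x)^TJ(x)p=0$ you flag is a genuine but shared caveat: the paper's final step-size threshold is ``strictly positive'' only when $J(x)p\neq 0$.
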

\begin{proof}
From \cref{thm:lem} we have that there exists a constant $\phi_1>0$ such that
\begin{equation*}
\Psi(x + \beta p) - \Psi(x) \leq - \frac{\beta\lambda}{2} p^T J(x)^TJ(x)p + \lambda \beta c(x) + \phi_1 \beta^2 ||p||^2.
\end{equation*}
From the second equation in \cref{eq:kkt2} it follows that $c(x) + \nabla c(x)^T p \leq 0$. Hence, for all $\beta\in[0,1]$ we have
\begin{equation*}
c(x) + \beta\nabla c(x)^T p \leq (1-\beta) c(x). 
\end{equation*}
From this it also follows that 
\begin{equation*}
[c(x) + \beta\nabla c(x)^T p]_+ -  [c(x)]_+ \leq - \beta [c(x)]_+ . 
\end{equation*}
Again using Taylor's theorem we have for some $t_\beta\in(0,1)$ that
\begin{align*}
[c(x + \beta p ) ]_+ &= \left[c(x) + \beta \nabla c(x)^T p + \frac{\beta^2}{2}p^T \nabla c^2 (x + t_\beta\beta p)p  \right]_+ \\ 
&\leq \left[c(x) + \beta \nabla c(x)^T p\right]_+ +  \left[\frac{\beta^2}{2}p^T \nabla c^2 (x + t_\beta\beta p) p \right]_+ \\
& \leq \left[c(x) + \beta \nabla c(x)^T p\right]_+ +\phi_2 \beta^2 ||p||^2
\end{align*}
with $\phi_2 = \max_{t\in(0,1)} \frac{1}{2}\|\nabla^2 c(x + t p)\|$.
As a consequence we also have
\begin{align*}
[c(x+\beta p)]_+ -  [c(x)]_+ &\leq  \left[c(x) + \beta \nabla c(x)^T p\right]_+  - [c(x)]_+ +\phi_2 \beta^2 ||p||^2 \\
&\leq -\beta [c(x)]_+ +\phi_2 \beta^2 ||p||^2.
\end{align*}

Let us now write
\begin{align*}
F_r(x + \beta p) - F_r(x) &= \Psi(x + \beta p) - \Psi(x) + r\left([c(x + \beta p)]_+ - [c(x)]_+\right) \\
&\leq - \frac{\beta\lambda}{2} p^T J(x)^TJ(x)p + \lambda \beta c(x) + \phi_1 \beta^2 ||p||^2 - r\beta [c(x)]_+ + r \phi_2 \beta^2 ||p||^2\\
&\leq - \frac{\beta\lambda}{2} p^T J(x)^TJ(x)p + \beta(\lambda- r) [c(x)]_+  +   (\phi_1 + r \phi_2)\beta^2 ||p||^2 \\
&\leq - \frac{\beta\lambda}{2} p^T J(x)^TJ(x)p +  (\phi_1 + r \phi_2)\beta^2 ||p||^2. 
\end{align*}
The final inequality follows from the fact that $r>\lambda$. It is clear that \cref{eq:descent} holds for all values of $\beta$ that satisfy
\begin{equation*}
- \frac{\beta\lambda}{2} p^T J(x)^TJ(x)p +  (\phi_1 + r \phi_2)\beta^2 ||p||^2 \leq - \frac{\eta\lambda\beta}{2}  p^T  J(x)^T J(x) p.
\end{equation*}
By rearranging the terms it follows that the above equality holds for all $\beta$ with
\begin{equation*}
\beta \leq  \frac{(1-\eta)\lambda p^T J(x)^T J(x)p}{2(\phi_1 + r \phi_2)||p||^2}. 
\end{equation*}
This concludes the proof since the right-hand side in this equality is strictly positive. 
\end{proof}

\begin{theorem}\label{thm:conv}
Suppose we generate an infinite sequence of iterates $x_{k} = x_{k-1} + \beta_k p_k$ that all satisfy descent condition \cref{eq:descent} for penalty parameter $r_k = \max(2\lambda_k,r_{k-1})$, with $(p_k,\lambda_k)$ the KKT pair \cref{eq:kkt2} corresponding with $x_{k-1}$ and $r_0$ an initial penalty parameter. Suppose $J(x_{k-1})^T J(x_{k-1}) \succeq \mu I$ for some constant $\mu>0$ independent of $k$. Suppose in addition that the iterates $(x_k,\lambda_k)$ remain bounded and that there exists a constant $\gamma$ independent of $k$ such that $\beta_k \lambda_k > \gamma >0$. Then there exists a sub-sequence $\mathcal{K}\subset\mathbb{N}$ such that $(x_k,\lambda_k)_{k\in\mathcal{K}}$  converges to a solution of \cref{eq:kkt1}. 
\end{theorem}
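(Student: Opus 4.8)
The plan is to combine a Bolzano--Weierstrass compactness argument with a summability estimate extracted from the descent condition. Since the iterates $(x_k,\lambda_k)$ are assumed bounded, some subsequence $(x_k,\lambda_k)_{k\in\mathcal{K}}$ converges to a limit $(\bar x,\bar\lambda)$; the substance of the theorem is to show that this limit satisfies \cref{eq:kkt1}. The engine driving everything is the claim that $\|p_k\|\to 0$, which I would obtain by summing the descent inequality \cref{eq:descent} over $k$. Writing $a_k = \tfrac{\eta\lambda_k\beta_k}{2}\,p_k^T J(x_{k-1})^T J(x_{k-1}) p_k \ge 0$, the descent condition for penalty $r_k$ reads $F_{r_k}(x_k) \le F_{r_k}(x_{k-1}) - a_k$.

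The main obstacle is that the penalty parameter $r_k$ changes from step to step, so the merit-function values do not telescope directly: one has $F_{r_k}(x_{k-1}) = F_{r_{k-1}}(x_{k-1}) + (r_k-r_{k-1})[c(x_{k-1})]_+$, i.e. the sequence $G_k := F_{r_k}(x_k)$ can jump upward whenever the penalty is increased. The way around this is to observe that these jumps are summable. Because $\lambda_k$ is bounded, $r_k = \max(2\lambda_k,r_{k-1})$ is a nondecreasing bounded sequence, hence convergent, so $\sum_k (r_k-r_{k-1}) < \infty$; and since $x_{k-1}$ stays in a bounded set and $c$ is continuous, $[c(x_{k-1})]_+$ is uniformly bounded. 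Therefore $\delta_k := (r_k-r_{k-1})[c(x_{k-1})]_+$ satisfies $\sum_k\delta_k<\infty$. Telescoping $G_k \le G_{k-1} + \delta_k - a_k$ gives $\sum_{k=1}^N a_k \le G_0 - G_N + \sum_{k=1}^N \delta_k$, and since $G_N \ge \Psi(x_N)$ is bounded below ($\Psi$ being continuous on the bounded region containing the iterates), I conclude $\sum_k a_k<\infty$, hence $a_k\to 0$. Finally, from $J(x_{k-1})^TJ(x_{k-1})\succeq\mu I$ and $\beta_k\lambda_k>\gamma$ one gets $a_k \ge \tfrac{\eta\gamma\mu}{2}\|p_k\|^2$, so $\|p_k\|\to 0$.

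It remains to pass to the limit in the subproblem KKT system \cref{eq:kkt2}, which holds at every step with the pair $(p_k,\lambda_k)$ attached to $x_{k-1}$. Since $x_k - x_{k-1} = \beta_k p_k$ with $\beta_k\le 1$ and $\|p_k\|\to 0$, both $x_{k-1}\to\bar x$ and $x_{k-1}+p_k\to\bar x$ along $\mathcal{K}$. Using continuity of $\nabla\Psi$, $\nabla c$ and $J$, boundedness of $J^TJ$ on the bounded region (so that $J(x_{k-1})^TJ(x_{k-1})p_k\to 0$ and $\nabla c(x_{k-1})^T p_k\to 0$), together with $\lambda_k\to\bar\lambda$, the first equation of \cref{eq:kkt2} yields $\nabla\Psi(\bar x)+\bar\lambda\nabla c(\bar x)=0$, while the second yields $c(\bar x)=0$. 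Thus $(\bar x,\bar\lambda)$ solves \cref{eq:kkt1}. Moreover $\beta_k\le 1$ forces $\lambda_k\ge\beta_k\lambda_k>\gamma$, so $\bar\lambda\ge\gamma>0$, consistent with the positivity of the multiplier established in \cref{thm:exist} and \cref{thm:unique_lag}.
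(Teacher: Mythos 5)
Your proof is correct, and its overall architecture is the same as the paper's: Bolzano--Weierstrass to extract a convergent subsequence $(\bar x,\bar\lambda)$, the descent condition combined with $J(x_{k-1})^TJ(x_{k-1})\succeq\mu I$ and $\beta_k\lambda_k>\gamma$ to force $\|p_k\|\to 0$, and passage to the limit in the subproblem KKT system \cref{eq:kkt2}. Where you genuinely diverge is the treatment of the varying penalty parameter, and your treatment is tighter. The paper disposes of the issue by asserting that, since $\lambda_k$ is bounded, one may take $r_k = r$ constant for all sufficiently large $k$, and then uses monotone decrease of the single merit function $F_r(x_k)$; but a nondecreasing bounded sequence need not be eventually constant (e.g. $\lambda_k = 1-1/k$ gives $r_k = 2-2/k$ increasing forever, consistent with all hypotheses of the theorem), so strictly speaking the paper's telescoping has a small gap. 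Your summability argument closes it: $\sum_k (r_k - r_{k-1}) = r_\infty - r_0 < \infty$ by monotone boundedness, $[c(x_{k-1})]_+$ is uniformly bounded on the bounded iterate set, hence the upward jumps $\delta_k$ of $G_k = F_{r_k}(x_k)$ are summable, and telescoping $G_k \le G_{k-1} + \delta_k - a_k$ with $G_k$ bounded below gives $\sum_k a_k < \infty$, hence $a_k \to 0$ and $\|p_k\|\to 0$ along the whole sequence. What the paper's shortcut buys is brevity; what yours buys is rigor under the stated update rule $r_k = \max(2\lambda_k, r_{k-1})$, plus the slightly stronger conclusion that the decrements are summable. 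Two further small points in your favor: you keep the indexing consistent with the theorem statement (the KKT pair lives at $x_{k-1}$, so you verify $x_{k-1}\to\bar x$ and $x_{k-1}+p_k\to\bar x$ using $\beta_k\le 1$, a step the paper elides by writing the KKT system at $x_k$), and you note $\bar\lambda \ge \gamma > 0$, so the limit multiplier is strictly positive as \cref{thm:unique_lag} requires.
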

\begin{proof}
By boundedness of $(x_k,\lambda_k)$ we know, using the Bolzano-Weierstrass theorem, that there exist a convergent sub-sequence $(x_k,\lambda_k)_{k\in\mathcal{K}}\rightarrow (\bar{x},\bar{\lambda})$ for some $\mathcal{K} \subset \mathbb{N}$. Hence, we can assume that $r_k = r$ for some constant $r$ for all $k\in\mathcal{K}$ sufficiently large. Since the descent condition \cref{eq:descent} holds and $\lambda_k\beta_k>\gamma$ we have for $k$ sufficiently large
\begin{equation*}
F_r(x_{k}) - F_r(x_{k-1}) \leq - \frac{\eta \lambda_k\beta_k}{2}  p_k^T  J(x_{k-1})^T J(x_{k-1}) p_k \leq - \frac{\eta\mu\gamma}{2}||p_k||^2.
\end{equation*}
In addition we have that the sequence $F_r(x_k)$ is decreasing and bounded below and thus it must converge. Hence, it also follows that $F_r(x_{k-1}) - F_r(x_{k})\rightarrow 0$. Combining this with the fact that $F_r(x_{k-1}) - F_r(x_{k}) \geq \frac{\eta\mu\gamma}{2}||p_k||^2$, we can conclude that $p_k\rightarrow 0$.  Writing out the KKT conditions \cref{eq:kkt2} for $(x_k,\lambda_k)$ we get
\begin{equation*}
\left\{ \begin{matrix}\nabla \Psi(x_k + p_k) + \lambda_k (\nabla c(x_k) + J(x_k)^T J(x_k) p_k) = 0, \\
c(x_k) + \nabla c(x_k)^T p_k + \frac{1}{2}p_k^T J(x_k)^T J(x_k)p_k = 0. \end{matrix} \right.
\end{equation*}
Now by taking the limit for $k\in\mathcal{K}$ and using the fact that $p_k\rightarrow 0$ we have
\begin{equation*}
\left\{ \begin{matrix}\nabla \Psi(\bar{x}) + \bar{\lambda}\nabla c(\bar{x}) = 0, \\
c(\bar{x})= 0, \end{matrix} \right.
\end{equation*}
which concludes the proof. 
\end{proof}

\begin{algorithm}
\caption{``exact'' Sequential Projected Newton method (exact SPN)}
\label{alg:SQCO} 
\begin{algorithmic}[1] \small
\STATE{Initialize $k=0, r_0 > 0$ and $\eta,\theta\in(0,1)$ and choose tolerance $\tau$.}
\STATE{Compute initial point $x_0$ with $||s(x_0) - b||\leq \sigma$ and set $\lambda_0 = -\frac{\nabla \Psi(x_0)^T \nabla c(x_0)}{||\nabla c(x_0)||^2}$. \label{lam0}}
\WHILE{$||F(x_{k},\lambda_{k})||\geq \tau$} 
\STATE{Solve \eqref{eq:kkt2} for $x=x_k$ and denote the solution as $(p_{k+1},\tilde{\lambda}_{k+1})$. \label{solve}}
\STATE{Update penalty parameter $r_{k+1} = \max(2\tilde{\lambda}_{k+1},r_{k})$.}
\STATE{Choose $\beta_{k+1}$ the largest value in $\left\{1,\theta, \theta^2, \theta^3,\ldots \right\}$ such that 
\begin{equation*}F_{r_{k+1}}(x_{k} +\beta_{k+1} p_{k+1}) - F_{r_{k+1}}(x_{k}) \leq  - \frac{\eta \tilde{\lambda}_{k+1}\beta_{k+1}}{2}||J(x_k)p_{k+1}||^2  \end{equation*} \label{backtrack}}
\STATE{Update $x_{k+1} = x_{k} + \beta_{k+1} p_{k + 1}$ and $\lambda_{k + 1} = \lambda_{k} + \beta_{k + 1}\left(\tilde{\lambda}_{k+1} - \lambda_{k}\right)$.}
\STATE{$k = k+1$}
\ENDWHILE
\end{algorithmic}
\end{algorithm}

\begin{remark}
Boundedness of the Lagrange multiplier $\lambda_k$ is the most important assumption in \cref{thm:conv}. Otherwise, the descent property \cref{eq:descent} is meaningless, since this would imply $r_k\rightarrow \infty$. Boundedness of $x_k$ is not necessary to prove that $p_{k}\rightarrow 0$, which can be seen by carefully reading the proof. By \cref{eq:littleo} we know that this implies that the sub-problem \cref{eq:reg2} becomes a better and better approximation of \cref{eq:reg1}. Hence, it might be possible to remove the boundedness assumption of $x_k$. However, since for practical purposes (which will become clear in the rest of the manuscript) we need to consider a more heuristic approach, we believe there is little added value in explicitly proving this.
\end{remark}

\Cref{thm:conv} allows us to formulate a line-search method for solving \eqref{eq:kkt1}, see \cref{alg:SQCO}. Convergence of the algorithm can be monitored using the nonlinear function
\begin{equation}\label{eq:Fdef}
F(x,\lambda) = \begin{pmatrix} \nabla \Psi(x) + \lambda \nabla c(x) \\ c(x) \end{pmatrix}
\end{equation}
that expresses the KKT conditions \cref{eq:kkt1} of the equality constrained optimization problem \cref{eq:reg1} (not to be confused with the merit function $F_r(x)$ used in the line-search). In \cref{sec:num} we also investigate other possible convergence metrics. 

Note that the definition of $\lambda_0$ on line \ref{lam0} can be seen as the least squares approximation of the Lagrange multiplier corresponding with $x_0$, see for instance \cite{boggs1989strategy}. First of all by \cref{thm:descent} we have that the backtracking line-search on line \ref{backtrack} is well-defined and terminates with some $\beta_{k+1}>0$. Furthermore we know by \cref{thm:conv} that this algorithm converges towards the solution of \cref{eq:kkt1} under certain assumptions. 

To improve the practical performance of \cref{alg:SQCO} we describe a slightly more heuristic approach. In principle we can use any constrained optimization algorithm to solve \eqref{eq:reg2} on line \ref{solve}. However, the Projected Newton (PN) method \cite{Cornelis_2020,cornelis2020projected} is particularly well suited to solve this problem, since it exploits some of the problem-specific properties. The Projected Newton method is itself an iterative method that uses Generalized Krylov subspaces to solve a sequence of projected problems. The number of floating point operations used in the Projected Newton method grows with the iteration index of the algorithm, so for performance it is important that the number of (inner) iterations required to converge remains relatively small. More specifically we have that the number of floating point operations in the Projected Newton method grows with $\mathcal{O}(k_{inner}^2)$, where $k_{inner}$ is the inner iteration index. Hence, if we require that \cref{eq:reg2} is solved very accurately, then \cref{alg:SQCO} will not be very efficient. In the numerical experiments section we illustrate that \cref{eq:reg2} does not have to be solved very accurately to be able to make progress towards the solution. Furthermore it turns out that \cref{eq:reg2} is a very accurate model of \cref{eq:reg1} in the sense that the descent property \cref{eq:descent} is satisfied by $\beta=1$ for all iterations in the experiments of \cref{sec:num}. Hence, to improve performance of the algorithm and remove the seemingly unnecessary line-search we describe a slightly more heuristic approach.

To formulate the computationally efficient variant of \cref{alg:SQCO} we define
\begin{equation} \label{eq:fk}
F^{(k)}(p,\lambda) = \begin{pmatrix} \nabla\Psi(x_k + p) + \lambda\left(\nabla c(x_k) + J(x_k)^T J(x_k) p\right) \\
c(x_k) + \nabla c(x_{k})^T p + \frac{1}{2} p^T J(x_k)^T J(x_k) p  \end{pmatrix}
\end{equation}
the system of nonlinear equations that express the KKT conditions \cref{eq:kkt2} of the sub-problem on line \ref{solve} of \cref{alg:SQCO} for $x=x_k$.
\Cref{alg:SPN} is a heuristic approach that works very well in practice. We illustrate this claim by a number of numerical experiments in \cref{sec:num}.
\begin{algorithm}
\caption{Sequential Projected Newton (SPN) method}
\label{alg:SPN} 
\begin{algorithmic}[1] \small
\STATE{Initialize $k=0$ and $\zeta > 1$ and choose tolerance $\tau$.}
\STATE{Compute initial point $x_0$ with $||s(x_0) - b||\leq \sigma$ and set $\lambda_0 = -\frac{\nabla \Psi(x_0)^T \nabla c(x_0)}{||\nabla c(x_0)||^2}$.}
\WHILE{$||F(x_{k},\lambda_{k})||\geq \tau$} 
\STATE{Apply Projected Newton method \cite{cornelis2020projected} to \begin{equation}\label{eq:subproblem}\min_{p\in\mathbb{R}^n} \Psi(x_k + p) \hspace{0.5cm}\text{subject to}\hspace{0.5cm}\frac{1}{2}||J(x_k)p  + s(x_k) - b||^2 = \frac{\sigma^2}{2}
 \end{equation}
with stopping criterion $||F^{(k)}(p_{k+1},\lambda_{k+1})||\leq \max\left\{\frac{||F(x_{k},\lambda_{k})||}{\zeta}, \frac{\tau}{10}\right\}$}
\STATE{Update $x_{k+1} = x_{k} + p_{k + 1}$}
\STATE{$k = k+1$}
\ENDWHILE
\end{algorithmic}
\end{algorithm}

\begin{remark} \label{thm:forcing_sequence}
The introduction of the parameter $\zeta$ in \cref{alg:SPN} is inspired by the class of inexact Newton methods \cite{dembo1982inexact}, which is a generalization of Newton's method for solving a nonlinear system of equations $F(v):\mathbb{R}^{n} \rightarrow \mathbb{R}^{n}$. In stead of solving the Jacobian linear system $F'(v_k) \Delta v_{k+1} = -F(v_k)$ exactly for some iterate $v_k\in \mathbb{R}^n$, an approximate solution is computed that satisfies $\|F'(v_k)\Delta v_{k+1} + F(v_k) \| \leq \eta_k \|F(v_k)\|$, with ``forcing term'' $\eta_k \in (0,1]$. Choosing a suitable forcing sequence can significantly improve practical performance \cite{eisenstat1996choosing,gomes2008globally}. 
\end{remark}

\section{Reference methods and related work} \label{sec:reference_methods}

As a first reference method we consider an algorithm that is closely related to the Sequential Quadratically Constrained Quadratic Programming (SQCQ) method developed in \cite{fukushima2003sequential}. It can be seen as a modified version of \cref{alg:SQCO}, where we replace sub-problem \eqref{eq:reg2} on line \ref{solve} with sub-problem \eqref{eq:quadmodel} that uses a quadratic model of $\Psi(x + p)$. Moreover, we also allow an approximate solution of the sub-problem using the $\zeta$ parameter, similarly as in \cref{alg:SPN}. Accuracy of the sub-problem is monitored using  
\begin{equation*} 
\tilde{F}^{(k)}(p,\lambda) = \begin{pmatrix} \nabla\Psi(x_k) + \nabla^2 \Psi(x_k) p + \lambda\left(\nabla c(x_k) + J(x_k)^T J(x_k) p\right) \\
c(x_k) + \nabla c(x_{k})^T p + \frac{1}{2} p^T J(x_k)^T J(x_k) p  \end{pmatrix}.
\end{equation*}
The resulting algorithm is shown in \cref{alg:spnq} and is referred to as SPN-Q (where Q stands for ``quadratic"). As previously mentioned, we can readily modify the analysis in \cref{sec:deriv} to show similar results for this modified version. In fact, the authors in \cite{fukushima2003sequential} provide a local and global convergence analysis of the closely related SQCP method. 

\begin{algorithm}
\caption{SPN-Q method}
\label{alg:spnq} 
\begin{algorithmic}[1] \small
\STATE{Initialize $k=0, r_0 > 0$ and $\eta,\theta\in(0,1)$ and choose tolerance $\tau$.}
\STATE{Compute initial point $x_0$ with $||s(x_0) - b||\leq \sigma$ and set $\lambda_0 = -\frac{\nabla \Psi(x_0)^T \nabla c(x_0)}{||\nabla c(x_0)||^2}$.}
\WHILE{$||F(x_{k},\lambda_{k})||\geq \tau$} 
\STATE{Apply Projected Newton method \cite{cornelis2020projected} to \begin{equation}\label{eq:subproblem2}\min_{p\in\mathbb{R}^n} p^T \Psi(x_k) + \frac{1}{2} \nabla^2 p^T \Psi(x_k) p\hspace{0.5cm}\text{subject to}\hspace{0.5cm}\frac{1}{2}||J(x_k)p  + s(x_k) - b||^2 = \frac{\sigma^2}{2}
 \end{equation}
with stopping criterion $||\tilde{F}^{(k)}(p_{k+1},\tilde{\lambda}_{k+1})||\leq \max\left\{\frac{||F(x_{k},\lambda_{k})||}{\zeta}, \frac{\tau}{10}\right\}$}
\STATE{Update penalty parameter $r_{k+1} = \max(2\tilde{\lambda}_{k+1},r_{k})$.}
\STATE{Choose $\beta_{k+1}$ the largest value in $\left\{1,\theta, \theta^2, \theta^3,\ldots \right\}$ such that 
\begin{equation*}F_{r_{k+1}}(x_{k} +\beta_{k+1} p_{k+1}) - F_{r_{k+1}}(x_{k}) \leq  - \frac{\eta \tilde{\lambda}_{k+1}\beta_{k+1}}{2}||J(x_k)p_{k+1}||^2  \end{equation*}}
\STATE{Update $x_{k+1} = x_{k} + \beta_{k+1} p_{k + 1}$ and $\lambda_{k + 1} = \lambda_{k} + \beta_{k + 1}\left(\tilde{\lambda}_{k+1} - \lambda_{k}\right)$.}
\STATE{$k = k+1$}
\ENDWHILE
\end{algorithmic}
\end{algorithm}

To describe the second reference method, we consider the unconstrained reformulation of the regularized nonlinear inverse problem with fixed regularization parameter given by \cref{eq:reg_alpha}.
The main difficulty with solving \cref{eq:reg_alpha} is the fact that the fit-to-data term $\| s(x) - b\|$ is in general a non-convex function. This means that the solution is not necessarily unique, but more importantly, this also means that the Hessian $\nabla^2 f(x)$ of the objective function $f(x)=\frac{1}{2} \| s(x) - b\|^2 + \alpha\Psi(x)$ can possibly be indefinite. This in its turn implies that the standard Newton direction $p=-\left(\nabla^2 f(x)\right)^{-1} \nabla f(x)$ is not necessarily a descent direction. However, this difficulty can be circumvented by in stead computing the Gauss-Newton direction
\begin{equation} \label{eq:solve_gn}
(J(x)^TJ(x) + \alpha \nabla^2 \Psi(x)) p  = -J(x)^T(s(x) - b) - \alpha\nabla\Psi(x). 
\end{equation}
A similar approach is for instance taken in \cite{haber2000optimization}, where the authors consider nonlinear inverse problems where the forward model is described by a system of partial differential equations. The right-hand side in \cref{eq:solve_gn} is precisely $-\nabla f(x)$ (i.e. the same as in Newton's method) and $J(x)^T J(x)$ is a positive-definite approximation of the true Hessian of the nonlinear least squares term $\frac{1}{2} \|s(x) - b\|^2$. It is then easy to show that $p\in\mathbb{R}^n$ is a descent direction for $f(x)$, i.e. that $p^T\nabla f(x)\leq 0$. 

This means that we can compute a suitable step-length such that we have a decrease in the objective function $f(x + \beta p) < f(x)$. The ideas above form the basis of the Gauss-Newton method, see \cref{alg:GN}. The linear system of equations \cref{eq:solve_gn} can be solved (approximately) used the Conjugate Gradient method \cite{hestenes1952methods}, which is one of the most well-known and efficient Krylov subspace methods \cite{liesen2013krylov,saad2003iterative} . The stopping criterion for the linear system of equations is chosen to be $1/\zeta$, in correspondence to the choice of tolerance used for the Projected Newton method in \cref{alg:SPN}, see also \cref{thm:forcing_sequence}. 

Unfortunately, due to the fact that the fit-to-data term \cref{eq:reg_alpha} is non-convex we can not use specialized algorithms such as the split Bregman method \cite{goldstein2009split} or ADMM \cite{wahlberg2012admm}. However, SPN-Q and the Gauss-Newton method can be used regardless of which forward model $s(x)$ is used. Hence, we will use \cref{alg:GN,alg:spnq} to compare the Sequential Project Newton method with in our numerical experiments.

For linear inverse problems there also exist a wide range of different algorithms that can be used, see for instance \cite{lampe2012large,doi:10.1137/140967982,GAZZOLA2014180,4380459,rodriguez2008efficient,doi:10.1137/18M1194456,doi:10.1137/130917673} and references therein. Hence, many of these algorithm could also be used to solve sub-problem \eqref{eq:reg2} in the Sequential Projected Newton method. However, efficiency of the Projected Newton method was already illustrated by a number of numerical experiments in \cite{cornelis2020projected}, so we do not consider alternative algorithms for the inner solver in our experiments. 
\begin{algorithm}
\caption{Gauss-Newton (GN) method}
\label{alg:GN} 
\begin{algorithmic}[1] \small
\STATE{Initialize $x_0 = 0, k=0$ and $\zeta > 1, \theta\in(0,1)$ and choose tolerance $\tau$.}
\WHILE{$||\nabla c(x_k) + \alpha \nabla \Psi(x_k)||\geq \tau$} 
\STATE{Apply the Conjugate Gradient method \cite{hestenes1952methods} to 
\begin{equation}\label{eq:solvegn} \left(J(x_k)^TJ(x_k) + \alpha \nabla^2 \Psi(x_k)  \right) p = -\left(\nabla c(x_k) + \alpha \nabla\Psi(x_k) \right)
 \end{equation}
with stopping criterion $\frac{\|(J(x_k)^TJ(x_k) + \alpha \nabla^2 \Psi(x_k)) p_{k + 1} + \left(\nabla c(x_k) + \alpha \nabla\Psi(x_k) \right)\|}{\|\nabla c(x_k) + \alpha \nabla\Psi(x_k) \|}  \leq  \frac{1}{\zeta}$ \vspace{0.3cm}}. 
\STATE{Choose $\beta_{k+1}$ the largest value in $\left\{1,\theta, \theta^2, \theta^3,\ldots \right\}$ such that 
\begin{equation*} \frac{1}{2}\|s(x_k + \beta_{k + 1}p_{k + 1}) - b\|^2 + \alpha \Psi(x_k + \beta_kp_{k + 1}) < \frac{1}{2}\|s(x_k) - b\|^2 + \alpha \Psi(x_k). \end{equation*} \vspace{-0.5cm}}
\STATE{Update $x_{k+1} = x_{k} + \beta_{k+1} p_{k + 1}$.}
\STATE{$k = k+1$}
\ENDWHILE
\end{algorithmic}
\end{algorithm}

\begin{remark}\label{thm:initialpoint} We can use the Gauss-Newton method (\cref{alg:GN}) with $\alpha = 0$ to compute the initial point $x_0$ used in algorithms \ref{alg:SQCO}, \ref{alg:SPN} and \ref{alg:spnq}. In this case, the Gauss-Newton method converges towards the unregularized solution $\argmin \|s(x) - b\|$. We terminate the algorithm when $\|s(x) - b\| < \sigma$, which typically happens already after only a few iterations. Hence, computing this initial point is relatively cheap. We use a fixed tolerance of $10^{-10}$ (i.e. $\zeta = 10^{10}$) for \cref{eq:solvegn} in all our experiments (to make sure all algorithms start from the same initial point, regardless of other parameter choices). 
\end{remark}

\section{Test problem: Talbot-Lau X-ray phase contrast imaging} \label{sec:talbot}

For our numerical experiments we consider a test-problem from Talbot-Lau phase constrast imaging \cite{von2017grating}. The forward model of the Talbot-Lau phase contrast projection operation in the discretized setting can be described with:
\begin{equation}\label{eq:forward}
s(\mu,\epsilon,\delta) =  s_0 e^{-A\mu}\left[1 + v_0 e^{-A\epsilon} \cos(\phi_0 + A_\phi \delta) \right]
\end{equation}
where $s_0\in\mathbb{R}^{+}_0$ is the incoming intensity, $v_0\in(0,1)$ the visibility and $\phi_0 \in\mathbb{R}^m$ the phase shift without the object. The variables $\mu,\epsilon$ and $\delta$ are the absorption, dark field and phase contrast, respectively. In the current manuscript we consider two-dimensional examples where the three components $\mu,\epsilon$ and $\delta$ can be visualized as pixel images. Similarly we could also consider three dimensional examples where the different components contain a large number of voxels. 
If we denote $\tilde{n}$ the total number of pixels (or voxels) then we have $\mu,\epsilon,\delta \in\mathbb{R}^{\tilde{n}}.$ 
Now let $m$ denote the total number of projections, i.e. the number of projection angles times the number of detectors. Then we have $s(\mu,\epsilon,\delta)\in\mathbb{R}^m$. In addition, we denote $b$ the vector of length $m$ containing the acquired (noisy) projections.

The matrix $A\in\mathbb{R}^{m\times \tilde{n}}$ is the conventional projection matrix and $A_{\phi}\in\mathbb{R}^{m\times \tilde{n}}$ the system matrix for differential phase contrast: $A_{\phi}=DA$, where $D \in\mathbb{R}^{m \times m}$ is a matrix performing the finite difference operation. We generate the matrix $A$ using the ASTRA toolbox \cite{van2015astra,van2016fast} based on a simple parallel beam geometry with line kernel, see \cref{sec:matlab}. Let us denote $x = \left[\mu^T, \epsilon^T,\delta^T \right]^T \in \mathbb{R}^{n}$ with $n=3\tilde{n}$ the vector obtained by stacking the three components.  Note that $s_{i}(x)$ is a function $\mathbb{R}^{n} \longrightarrow \mathbb{R}$ and $s(x)$ is a function $\mathbb{R}^{n} \longrightarrow \mathbb{R}^m$.

To use the techniques developed in this manuscript we first need to compute the Jacobian matrix $J(x)$ of the nonlinear forward model $s(x)$. 
To do so it is convenient to write $s(x) = \tilde{s}(\tilde{x})$, with $\tilde{x} = \bar{A}x = \left[t^T,u^T,v^T \right]^T \in\mathbb{R}^{3m}$,
\begin{equation*}
\bar{A}=\begin{pmatrix} A & 0 & 0 \\ 0 & A & 0 \\ 0 & 0 & A_\phi  \end{pmatrix} \text{ and } \tilde{s}_i(\tilde{x}) = s_0 e^{-t_i}\left[1 + v_0 e^{- u_i} \cos(\phi_0 + v_i) \right],\hspace{0.2cm} (i=1,\ldots,m).
\end{equation*}
Let us calculate the Jacobian $J_{\tilde{s}} (\tilde{x})$ of the function $\tilde{s}(\tilde{x})$. 
For $i \neq j$ we have that 
$\frac{\partial \tilde{s}_i}{\partial t_{j}} = \frac{\partial \tilde{s}_i}{\partial u_{j}} = \frac{\partial \tilde{s}_i}{\partial v_{j}} = 0$
and for $i = j$ we have
\begin{equation*}
\frac{\partial \tilde{s}_i}{\partial t_{i}} = -\tilde{s}_i, \hspace{0.2cm} 
\frac{\partial \tilde{s}_i}{\partial u_{i}} = -s_0 v_0 e^{- t_i} e^{- u_i} \cos(\phi_0 + v_i) \hspace{0.2cm} \text{and} \hspace{0.2cm}
\frac{\partial \tilde{s}_i}{\partial v_{i}} = -s_0 v_0 e^{- t_i} e^{- u_i} \sin(\phi_0 + v_i).
\end{equation*}
Hence we can write $J_{\tilde{s}} (\tilde{x}) = \begin{pmatrix} -D_1(\tilde{x}) & -D_2(\tilde{x}) & -D_3(\tilde{x}) \end{pmatrix} \in\mathbb{R}^{m \times 3m}$ with 
\begin{eqnarray*}
D_1(\tilde{x})  &=& \text{diag}\left(\tilde{s}_i\right)_{i = 1,\ldots,m} \\
D_2(\tilde{x})  &=& \text{diag}\left(s_0 v_0 e^{- t_i} e^{- u_i} \cos(\phi_0 + v_i) \right)_{i = 1,\ldots,m} \\
D_3(\tilde{x})  &=& \text{diag}\left(s_0 v_0 e^{- t_i} e^{- u_i} \sin(\phi_0 + v_i) \right)_{i = 1,\ldots,m}. 
\end{eqnarray*}
Using the chain rule for the Jacobian $s(x)  = \tilde{s}(\bar{A}x)$ we immediately get
\begin{equation*}
J(x) = J_{\tilde{s}} (\bar{A}x) \bar{A} = \begin{pmatrix} -D_1(\bar{A} x)A & -D_2(\bar{A} x)A & -D_3(\bar{A} x)A_\phi \end{pmatrix}.
\end{equation*}

\subsection{Total variation regularization}
Total variation regularization is a popular regularization technique that preserves edges in images. Let $y \in\mathbb{R}^{\tilde{n}}$ be a vector obtained by stacking all columns of a pixel image $Y\in\mathbb{R}^{N\times N}$ with $\tilde{n} = N^2$. The anisotropic total variation function is defined as
\begin{equation} \label{eq:tv}
\text{TV}(y) = \sum_{i,j = 1}^{N} |\partial^{(i,j)}_h(Y)| + |\partial^{(i,j)}_v(Y)|.
\end{equation}
with finite difference operators in the horizontal and vertical direction given by 
\begin{equation*} \partial^{(i,j)}_h(Y) = \left\{ \begin{matrix} Y_{i,j+1} - Y_{i,j}  & j<N \\ 0 & j=N \end{matrix} \right. \hspace{0.5cm}\text{and}\hspace{0.5cm} \partial^{(i,j)}_v(Y) = \left\{ \begin{matrix} Y_{i+1,j} - Y_{i,j}  & i<N \\ 0 & i=N \end{matrix} \right. . 
\end{equation*}
We can rewrite this in a more convenient way by first writing 
\begin{equation*}
\tilde{D} = \begin{pmatrix}
1 & - 1& & & \\ & \ddots&\ddots&&\\ && 1&-1
\end{pmatrix} \in\mathbb{R}^{(N-1) \times N}
\end{equation*}
which represents a finite difference approximation of the derivative operator in one dimension.
Let  $\otimes$ denote the Kronecker product. We can compactly write $\text{TV}(y) = ||\tilde{L}y||_1$ with 
\begin{equation*}
\tilde{L} = \begin{pmatrix}D_h \\D_v \end{pmatrix} \in\mathbb{R}^{(2\tilde{n} -2N) \times \tilde{n} } \hspace{0.5cm}\text{and} \hspace{0.5cm}  \left\{ \begin{matrix} D_h =  \tilde{D}  \otimes I_N \in \mathbb{R}^{(\tilde{n} - N) \times \tilde{n} } \\
D_v = I_N \otimes \tilde{D} \in \mathbb{R}^{(\tilde{n}  - N) \times \tilde{n} } \end{matrix}\right. . 
\end{equation*}
The matrices $D_h$ and $D_v$ represent the two dimensional finite difference approximation of the derivative operator in the horizontal and the vertical directions respectively. Now because we have three different images $\mu,\epsilon$ and $\delta$ we consider the regularization term $||Lx||_1$ with 
\begin{equation*}
L = \begin{pmatrix} \tilde{L} & 0 & 0 \\ 0 & \tilde{L} & 0 \\ 0 & 0 & \tilde{L} \end{pmatrix} \in\mathbb{R}^{\tilde{m}\times n}
\end{equation*}
and $\tilde{m} = 3(2\tilde{n} - 2N)$. The $\ell_1$ norm is of course non-differentiable, however, it is easy to
formulate a smooth approximation $\Psi(x) \approx ||Lx||_{1}$, where $\Psi: \mathbb{R}^n\rightarrow
\mathbb{R}$ is a twice continuously differentiable convex function. More precisely we define
\begin{equation}\label{eq:smoothlp}
\Psi(x) = \sum_{i=1}^{\tilde{m}} \sqrt{[Lx]_i^2 + \xi}
\end{equation}
where $\xi > 0$ is a small scalar that ensures smoothness. This is the same technique as used in \cite{cornelis2020projected}.

\section{Numerical experiments}\label{sec:num}

In this section we perform a number of numerical experiments using the Talbot-Lau test-problem described in \cref{sec:talbot}. The aim is to illustrate that \cref{alg:SPN} is a robust and computationally efficient method for solving the nonlinear system of equations $F(x,\lambda) = 0$ defined by the KKT equations \cref{eq:Fdef} for the equality constrained optimization problem \cref{eq:reg1}. This function is also used to monitor convergence of the algorithm. 
All experiments in this section are performed using MATLAB on a laptop computer with Intel(R) Core(TM) i7-7700HQ CPU @ 2.80GHz. 

For all our experiments we set $s_0 = 1$ in the forward model \cref{eq:forward}, which can be seen as a normalization of the projection data. Unless explicitly stated otherwise, the value $v_0 = 0.75$ is used for the visibility and $\phi_0$ is generated using three phase steps. We refer to \cref{sec:matlab} for more details on how the test-problems are generated. For a certain exact solution $x_{ex}=(\mu_{ex},\epsilon_{ex},\delta_{ex})$ we generate exact data $b_{ex} = s(x_{ex})$ and then add Gaussian distributed white noise to obtain noisy data $b$. We refer to the value $\sigma = ||b - b_{ex}||$ as the noise-level and to $\sigma/||b_{ex}||$ as the relative noise-level. We denote \texttt{npix} the number of pixels in each direction (we only consider square images) and \texttt{nangles} the number of projection angles used and we set the number of detectors equal to \texttt{npix}. For these choices we have $n = 3\times \texttt{npix}^2$ unknowns and $m = \texttt{npix} \times \texttt{nangles}$ measurements. We always choose \texttt{nangles} to be a multiple $(\geq 3)$ of \texttt{npix}, which implies $m\geq n$. In each iteration of the SPN algorithm we start the Projected Newton method with initial Lagrange multiplier $10^5$, which works well in practice. 

\textbf{Experiment 1.} We start by performing a small experiment with some exact images $\mu_{ex},\epsilon_{ex}$ and $\delta_{ex}$ of size $10\times 10$, i.e. $\texttt{npix} =10$, and $\texttt{nangles} = 4\times \texttt{npix}$, resulting in a projection matrix $A$ of size $400\times 100$ ($m = 400,\,n = 300$). Next we generate data $b$ with relative noise-level $0.01$ in MATLAB as follows

\begin{lstlisting}
npix = 10; nangles = 4*npix, nstep = 3; v0 = 0.75; nn = npix^2; 
x_ex = randn(3*nn,1)/10;
[A,D,phi0] = generate_testproblem(npix,nangles,nstep) %Appendix A. 
b_ex = exp(-A*x_ex(1:nn)).* (1+v0*exp(-A*x_ex(nn+1:2*nn)).*cos(phi0+D*(A*x_ex(2*nn+1:end))));
noiselevel = 0.01; noise = randn(m, 1); 
b = b_ex + noiselevel*norm(b_ex)*noise/norm(noise);
\end{lstlisting}

We apply the ``exact'' Sequential Projected Newton method, \cref{alg:SQCO}, with $\Psi(x) = \sum_{i=1}^{n} \sqrt{x_i^2 + \xi}$ and parameters $r_0 = 10^3,\,\eta = 10^{-4},\,\theta = 0.5,\,\tau=10^{-6}$ and $\xi= 10^{-6}$. Note that this choice of regularization term can be seen as a smooth approximation of the $\ell_1$-norm, which is not a good choice for the random exact solution that we generated above. However, with the current experiment we are not yet concerned with the quality of the obtained solution, but only with the convergence behavior.  We use the Projected Newton method \cite{cornelis2020projected} to solve sub-problem  and set the tolerance for this solver to $\tau/10$, i.e. we stop the algorithm when we have found a point that satisfies $||F^{(k)}(p_{k+1},\tilde{\lambda}_{k+1})||\leq \tau/10$, where $F^{(k)}(p,\lambda)$ is defined by \cref{eq:fk}. For comparison we also apply SPN-Q (\cref{alg:spnq}) using the same parameters and with $\zeta = \infty$.

\begin{figure}
\begin{center}
\includegraphics[width=1\textwidth]{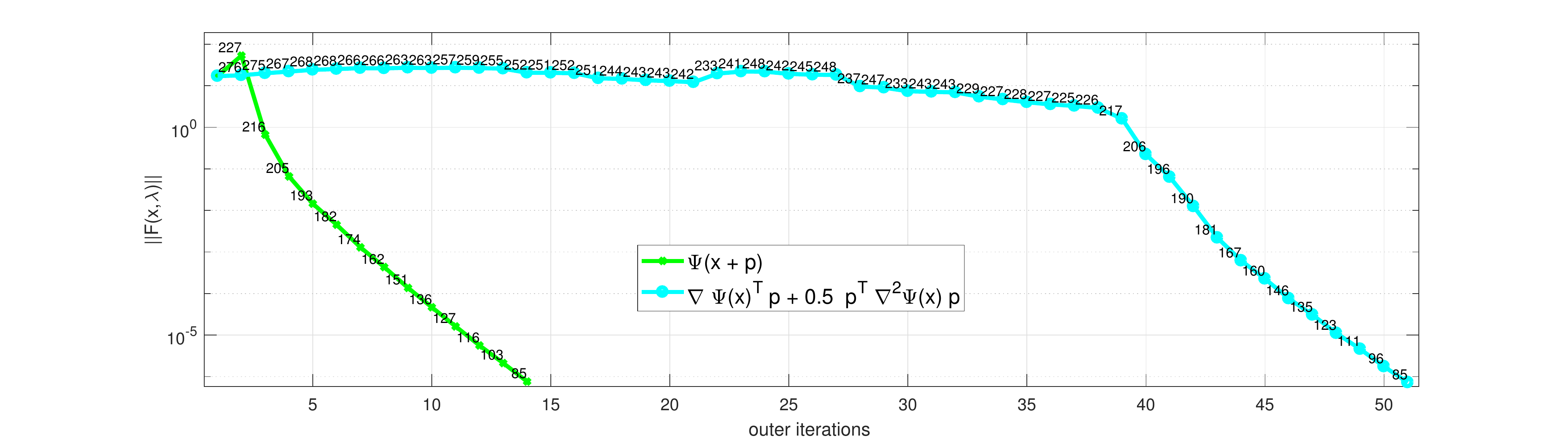}
\caption{\textbf{Experiment 1.} Convergence history of the exact SPN method (\cref{alg:SQCO}) compared with SPN-Q (\cref{alg:spnq}) with $\zeta = \infty$. Full step-sizes $\beta_k$ satisfy the decrease condition when we solve sub-problem \cref{eq:reg2} in \cref{alg:SQCO}, while smaller step-sizes are needed in case we use the quadratic model \cref{eq:quadmodel} in \cref{alg:spnq}. The numbers reported on the figure are the number of inner iterations required for that particular outer iteration. \label{fig:experiment1} \vspace{-1cm}}
\end{center}
\end{figure}

The result of this experiment is given by \cref{fig:experiment1}. We compare the value $||F(x_k,\lambda_k)||$ for both methods in terms of the outer SPN iterations $k$. In addition we also report the number of inner Projected Newton iterations required to converge for the sub-problems. First of all it is important to note that the decrease condition on line \ref{backtrack} of \cref{alg:SQCO} is satisfied by the step-length $\beta_k = 1$ for all iterations $k$ in case we solve sub-problem \eqref{eq:reg2}. In contrast, in case we use a quadratic model \eqref{eq:quadmodel}, the line-search terminates with $\beta_k<1$ for a large number of iterations. This is the reason that the convergence in the latter case is much slower. It is also interesting to observe that the quadratic model requires more Projected Newton iterations on average to satisfy the stopping criterion. Hence it is clear from this experiment why choosing sub-problem \eqref{eq:reg2} over \eqref{eq:quadmodel} is the better choice. 

\textbf{Experiment 2.} In the next experiment we illustrate the fact that the sub-problem  \eqref{eq:reg2} does not have to be solved to a high accuracy to be able to make progress towards the solution, i.e. to decrease $||F(x,\lambda)||$. We keep the same set-up as before and consider two different relative noise-levels, namely $0.1$ and $0.01$. We apply \cref{alg:SPN} with $\zeta = \infty$ and $\tau = 10^{-4},10^{-6},10^{-8}$ and $10^{-10}$.  This can be seen as applying \cref{alg:SQCO}, i.e. taking a fixed tolerance for the sub-problems, but always talking a full step-length (which satisfy the decrease condition on line \ref{backtrack} anyway). To clearly observe the effect of the tolerance of the Projected Newton method on convergence, we let the algorithm run for a fixed number of iterations. The result of this experiment is given by \cref{fig:experiment2}. The final achievable accuracy in terms of $||F(x,\lambda)||$ is completely determined by the tolerance of the inner-solver. Furthermore, it can be observed that convergence (up to stagnation of the method) does not differ much in case we use a lower tolerance for the sub-problems.  This implies that in early iterations it is not necessary to compute a very accurate solution. Hence an adaptive tolerance as implemented using the $\zeta$ co\"efficient in \cref{alg:SPN} seems like a good idea to improve performance of the algorithm. This is especially important since the computational cost of the Projected Newton method grows with the number of iterations. 
With the next experiment we investigate the trade-off between keeping the number of inner iterations as low as possible without compromising the convergence in terms of the number of outer iterations needed to converge. 
\begin{figure}
\begin{center}
\begin{tabular}{cc}
\includegraphics[width=0.5\textwidth]{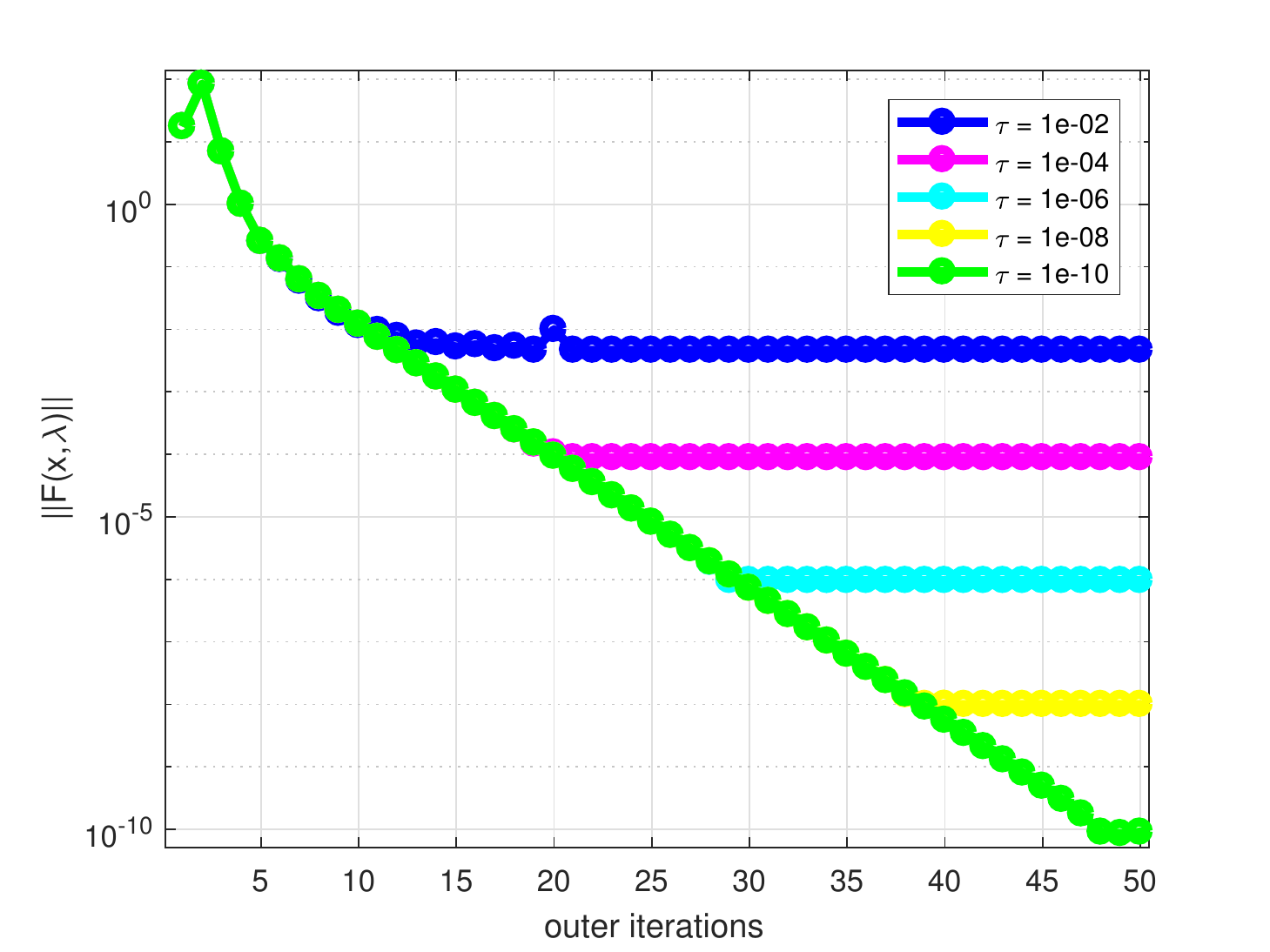} & \includegraphics[width=0.5\textwidth]{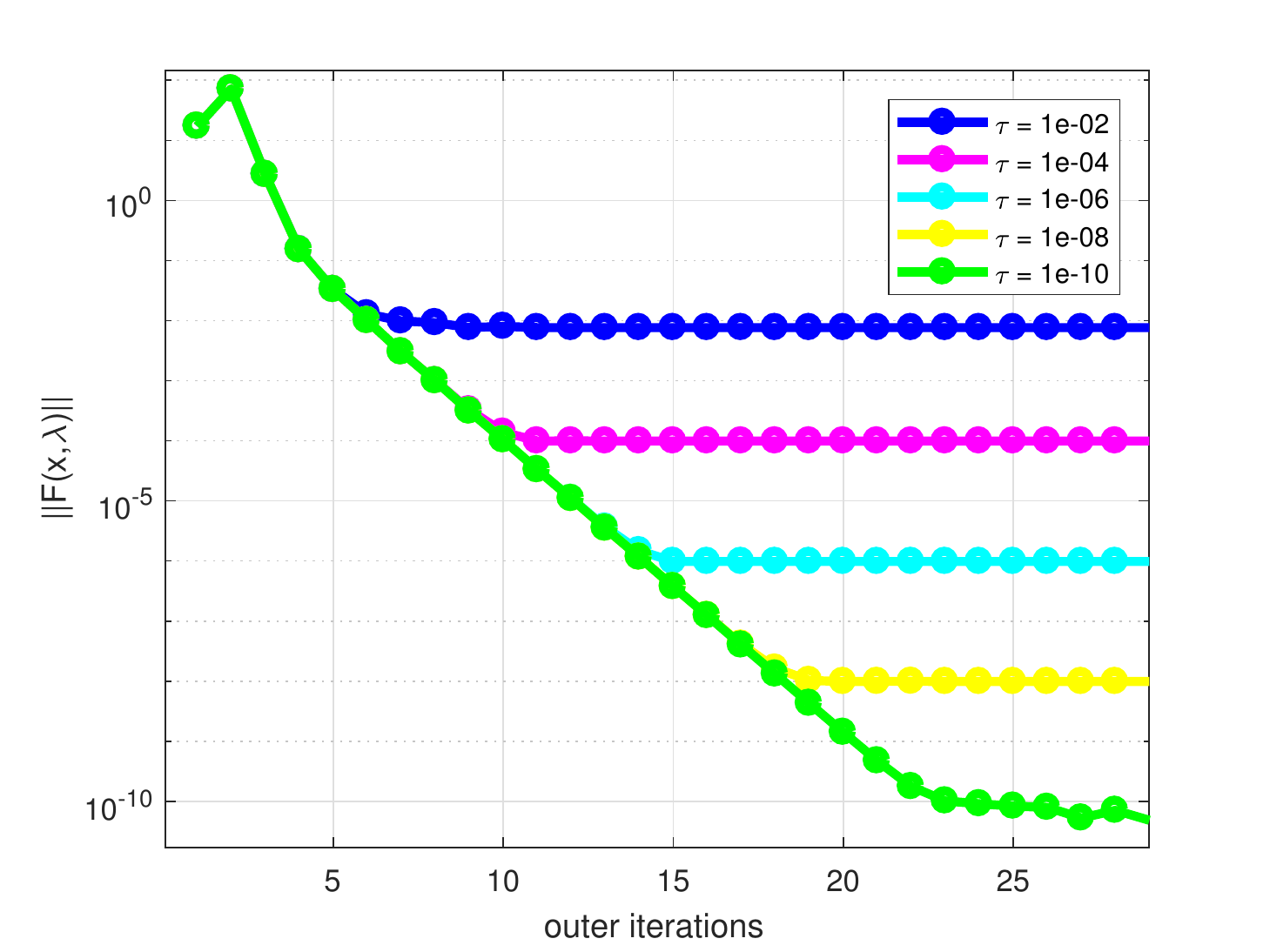} 
\end{tabular}
\caption{ \textbf{Experiment 2}. Influence of the tolerance of the Projected Newton inner-solver on the convergence history in terms of outer iterations of the SPN method (\cref{alg:SPN}) for two test-problems with relative noise-level 0.1 (left) and 0.01 (right). \label{fig:experiment2}}
\end{center}
\end{figure}

\textbf{Experiment 3.} In this experiment we study the effect of the parameter $\zeta>1$ in \cref{alg:SPN}. If this value is large, then we require a relatively accurate solution of the sub-problem and we can expect that we need a large number of inner iterations to converge. However, we can also expect to make significant progress towards the solution in terms of $||F(x_k,\lambda_k)||$ for the outer iterations. To confirm this we apply the Sequential Projected Newton method, \cref{alg:SPN}, with different values of the $\zeta$ parameter to a Talbot-Lau test-problem with  $\texttt{npix} = 16$ and $\texttt{nangles} = 5 \times \texttt{npix}$, i.e. $m = 1280,\,n = 768$. Furthermore we take relative noise-level $0.01$, visibility $v_0 = 0.5$, tolerance $\tau = 10^{-6}$ and smoothing parameter $\xi = 10^{-6}$. 

\begin{figure}
\begin{center}
\includegraphics[width=1\textwidth]{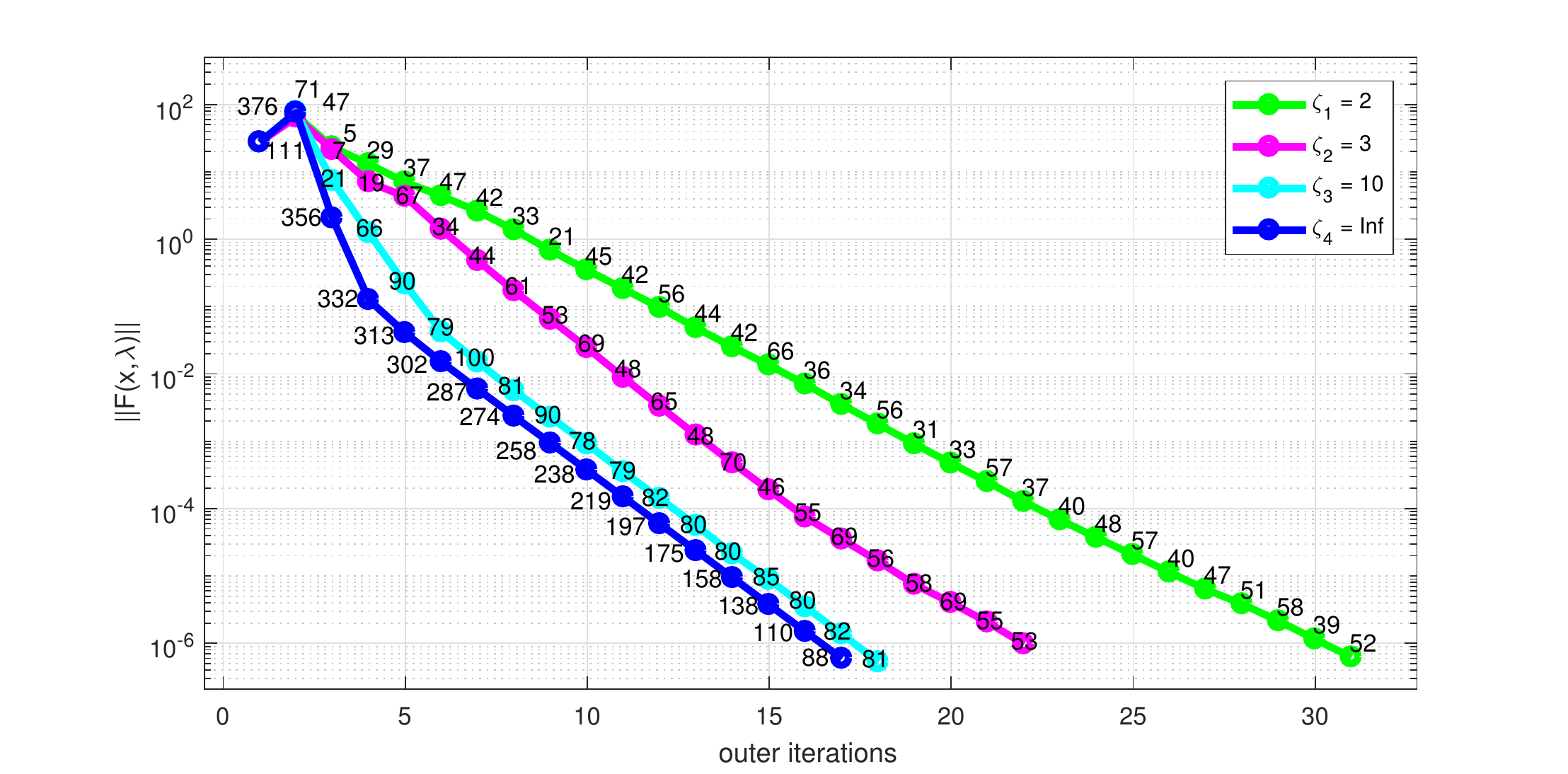}
\caption{\textbf{Experiment 3.}  Influence of the $\zeta$ parameter on the convergence history of the SPN method (\cref{alg:SPN}) in terms of the outer iterations. The numbers reported on the figure are the number of inner iterations required for that outer iteration.\label{fig:experiment3}}
\end{center}
\end{figure}

In \cref{fig:experiment3}  we show convergence of the method for $\zeta = 2,3,10$ and $\infty$. When we take a small value such as $\zeta = 2$  we can observe that the number of inner iterations needed to satisfy the stopping criterion is relatively small. However, convergence in terms of the outer iterations is also quite slow. For the choice $\zeta = \infty$ we get very rapid convergence in terms of outer iterations since we require an accurate solution ($\tau = 10^{-6}$) of the sub-problem, but we need a lot inner iterations to satisfy the stopping criterion of the inner-solver. It is also interesting to observe that in this case the number of Projected Newton iteration decreases towards convergence. This is because the Projected Newton method starts from an initial point containing all zeroes and near the solution of $F(x,\lambda) = 0$ we have that this is indeed a good initial ``guess'' for $p_k$.

To investigate this trade-of in a bit more detail we apply \cref{alg:SPN} again with a linearly spaced range of $100$ values for $\zeta$ from $1.1$ to $30$. The number of outer iterations required to converge are reported in \cref{fig:zeta_its} (left). If we choose $\zeta$ close to one, then we need to perform a lot of outer iterations. However, from a certain point onward, the number of outer iterations hardly changes, which implies that there is almost no benefit to solving the sub-problem very accurately.  

In \cref{fig:zeta_its} (right) we show the total number of inner (Projected Newton) iterations. Again, when $\zeta$ is close to one, we need to perform a lot of inner iterations, which is due to the fact that the number of outer iterations is quite large. A value of $\zeta$ is a little less than 5 seems to result in the fewest number of inner iterations for this particular experiment. From that point onwards, the number of inner iterations starts to increase again, since we require a more accurate solution of the sub-problem, without requiring fewer outer iterations to converge.
 
\begin{figure}
\begin{center}
\begin{tabular}{cc}
\includegraphics[width=0.5\textwidth]{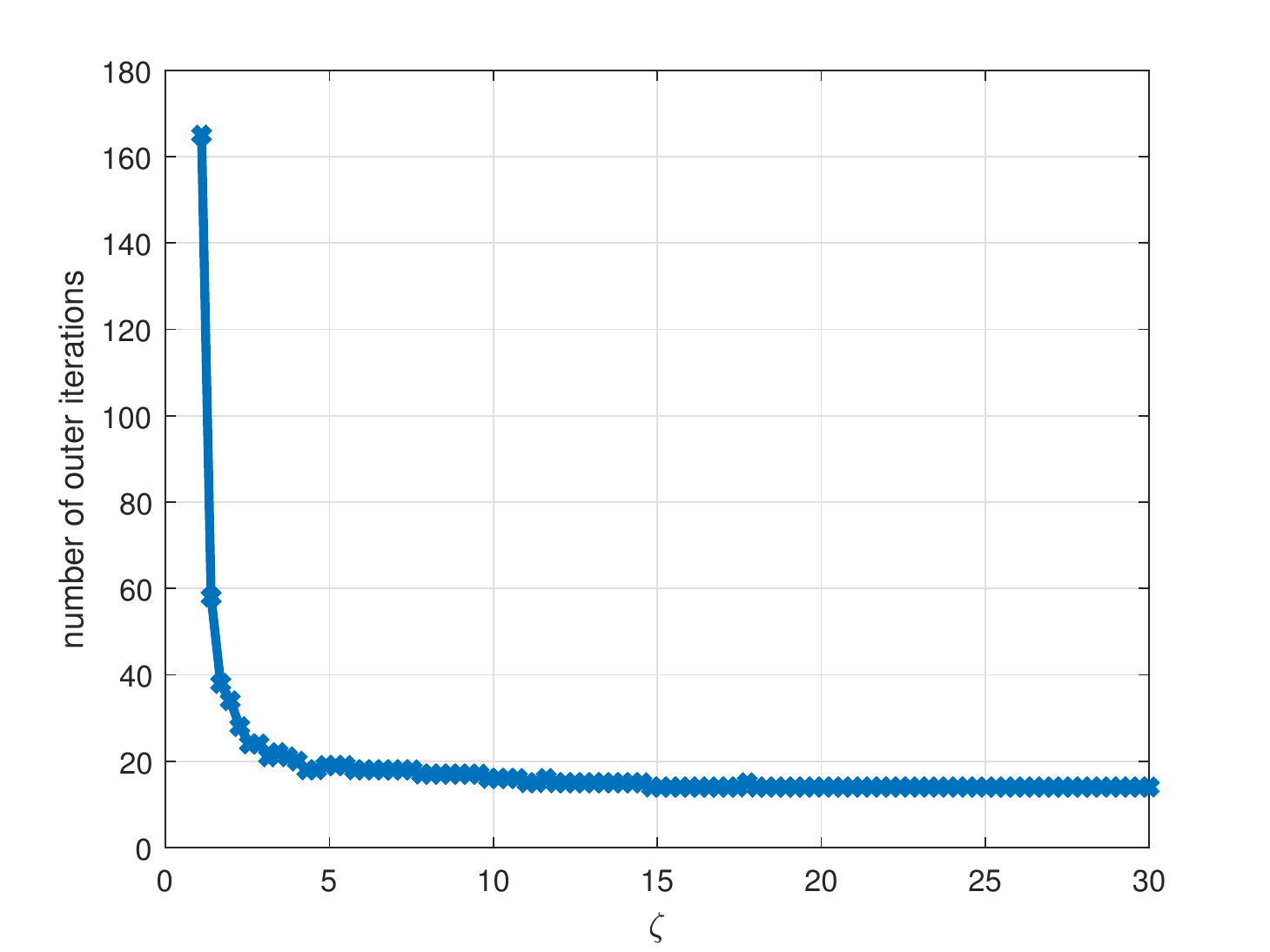} & \includegraphics[width=0.5\textwidth]{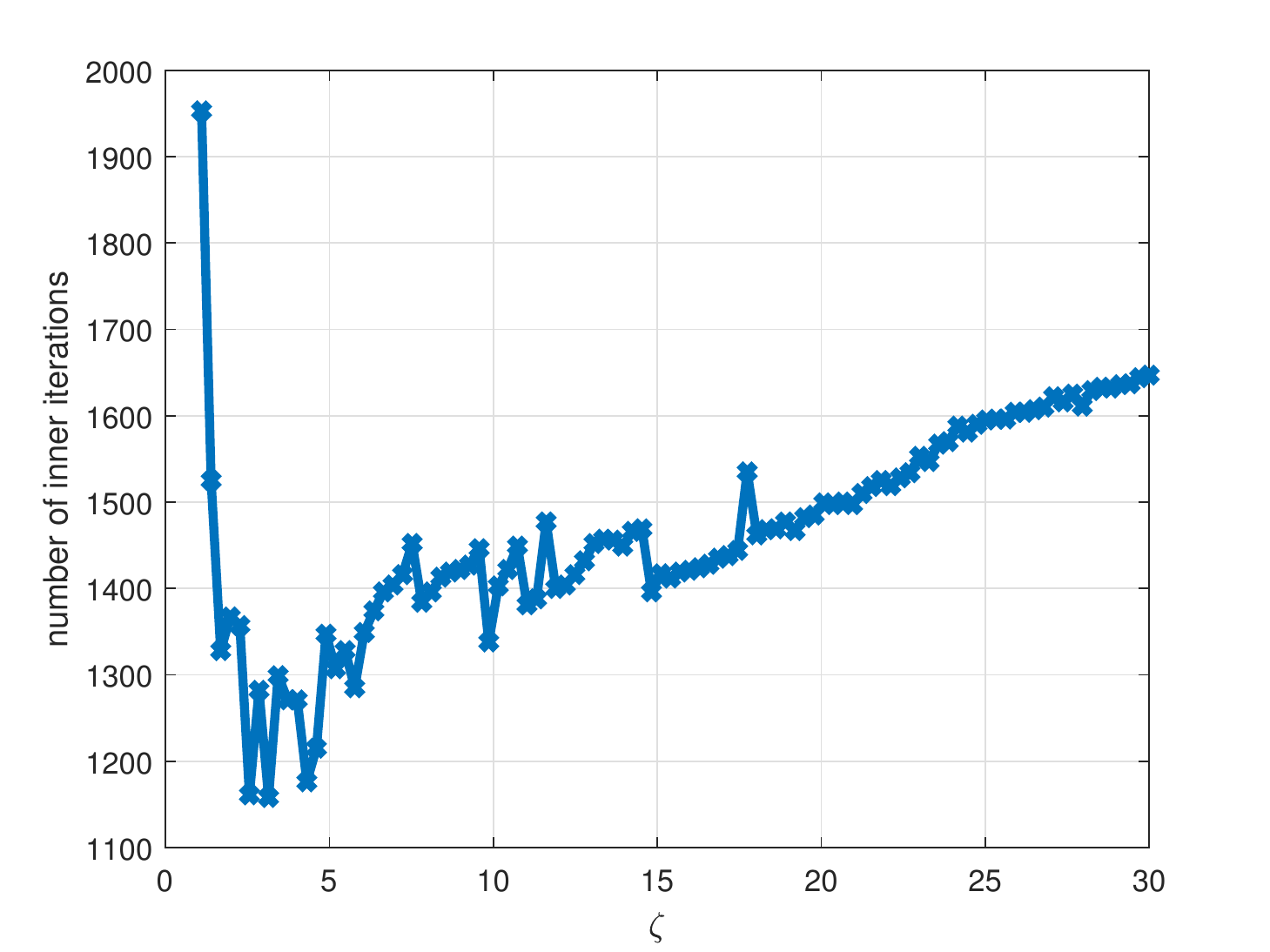} 
\end{tabular}
\caption{ \textbf{Experiment 3}. Influence of the $\zeta$ parameter in the SPN method (\cref{alg:SPN}) on the number of outer iterations required for convergence (left) and total number of inner iterations required to solve \cref{eq:subproblem} using the Projected Newton method (right). \label{fig:zeta_its}}
\end{center}
\end{figure}

\begin{table}
\centering
\scalebox{0.95}{
\begin{tabular}{ l|r | rrrrrrrrr}
						        &$\zeta$         & 1.1   & 1.5   & 2     & 3     & 4     & 5     & 10    & $\infty$ \\  \hline 
                    & outer its      & 113   & 41    & 26    & 16    & 13    & 12    & 8     & 5     \\ 
\texttt{npix} = 16  & avg. inner     & 4.67  & 10.63 & 16.85 & 23.69 & 28.08 & 32.50 & 44.75 & 169.80 \\ 
                    & runtime        & 0.59  & 0.50  & 0.50  & 0.45  & 0.45  & 0.48  & 0.47  & 3.29  \\ \hline 
                    & outer its      & 132   & 45    & 27    & 17    & 14    & 12    & 9     & 5\\
\texttt{npix} = 32  &avg. inner its  & 5.64  & 14.00 & 20.44 & 29.65 & 37.00 & 41.00 & 55.44 & 254.80 \\ 
                    &runtime         & 3.75  & 3.05  & 2.67  & 2.70  & 3.00  & 2.93  & 3.46  & 29.43 \\ \hline   
                    & outer its      & 157   & 47    & 28    & 18    & 15    & 13    & 9     & 6 \\
\texttt{npix} = 64  & avg. inner     & 8.38  & 19.68 & 29.71 & 41.44 & 52.00 & 59.92 & 75.44 & 337.33 \\ 
                    & runtime        & 40.40 & 28.06 & 26.21 & 25.10 & 26.67 & 28.47 & 26.98 & 241.25    
\end{tabular}}
\caption{ \textbf{Experiment 4}. The number of outer iterations (outer its), average number of inner iterations per outer iteration (avg. inner) and the total runtime (in seconds) of the SPN method (\cref{alg:SPN}) applied to a Talbot-Lau test-problem with \texttt{npix} = 16, 32 and 64. Reported timings are averaged over 10 runs with the same instance of noise. Th number of outer and inner iterations do not change over the different runs. \label{table:timing_zeta} }
\end{table}

\textbf{Experiment 4.}
Note that since the cost of the Projected Newton method grows (quadratically) with the inner iteration index, it does not necessarily mean that more inner iterations implies a longer total runtime. Fewer inner iterations on average, but more outer iterations might be preferable to few outer iterations that require a lot of inner iterations each. Hence in this experiment we take a closer look at the average number of inner iterations and the overall runtime of the method. To study the performance of the Sequential Projected Newton method (\cref{alg:SPN}) for different choices of $\zeta$ we report some actual timings in \cref{table:timing_zeta}. We use the same set-up as in experiment 3 but in addition to the choice $\texttt{npix} = 16$ $(m = 1280,\,n = 768)$, we now also take $\texttt{npix} = 32$ ($m = 5120,\,n=3072)$ and $\texttt{npix} = 64$ ($m = 20480,\,n=12228)$. The timings reported in \cref{table:timing_zeta} are averaged over 10 consecutive runs of the algorithm. We do not change the noise in the data for the different runs. Hence, the average number of inner iterations and the number of outer iterations do not change.  

It is clear that the choice $\zeta = \infty$ results in the fewest number of outer iterations. However, it also requires the most inner (Projected Newton) iterations on average. Recall that the number of floating point operations used in the Projected Newton method increases with the number of iterations, so it is important to keep this number as low as possible. Hence, this choice for $\zeta$ performs very badly in terms of total runtime. The timings for $\zeta$ between 1.1 and 10 are all comparable, except for the case $\texttt{npix} = 64$ and $\zeta=1.1$, which seems to perform a bit worse. The choice $\zeta = 10$ seems to keep the number of outer iterations close to that of the choice $\zeta=\infty$, while also keeping the average number of inner iterations modest. This value is used in all subsequent experiments. 

\begin{table}
\centering
\scalebox{1}{
\begin{tabular}{ l|r | rrrrr|r}
						 & run         & 1       & 2     & 3     & 4     & 5     & Std dev      \\  \hline 
SPN          & outer its   & 10      & 10    & 10    & 10    & 10    & 0   \\ 
             & runtime     & 27.17   & 30.01 & 29.75 & 29.06 & 30.03 & 1.20 \\ \hline 
SPN-Q        & outer its   & 87      & 91    & 152   & 89    & 93    & 27.82 \\ 
             & runtime     & 482.79  & 329.73& 721.85& 343.79& 354.73& 165.64   \\ \hline   
Gauss-Newton & outer its   & 88      & 74    & 63    & 88    & 65    & 12.05       \\ 
             & runtime     & 79.96   & 61.54 & 56.77 & 82.38 & 56.16 & 12.80
\end{tabular}}
\caption{ \textbf{Experiment 5}. The number of outer iterations (outer its) and total runtime (in seconds) of the SPN method (\cref{alg:SPN}), SPN-Q (\cref{alg:spnq}) and Gauss-Newton (\cref{alg:GN}) for 5 different runs (with different instances of noise) applied a the Tablot Lau test-problem with $\texttt{npix} = 64$. The standard deviation (Std dev) over the different runs is also reported for the different quantities. \label{table:batm}}
\end{table}

\begin{figure}
\begin{center}
\includegraphics[width=0.75\textwidth]{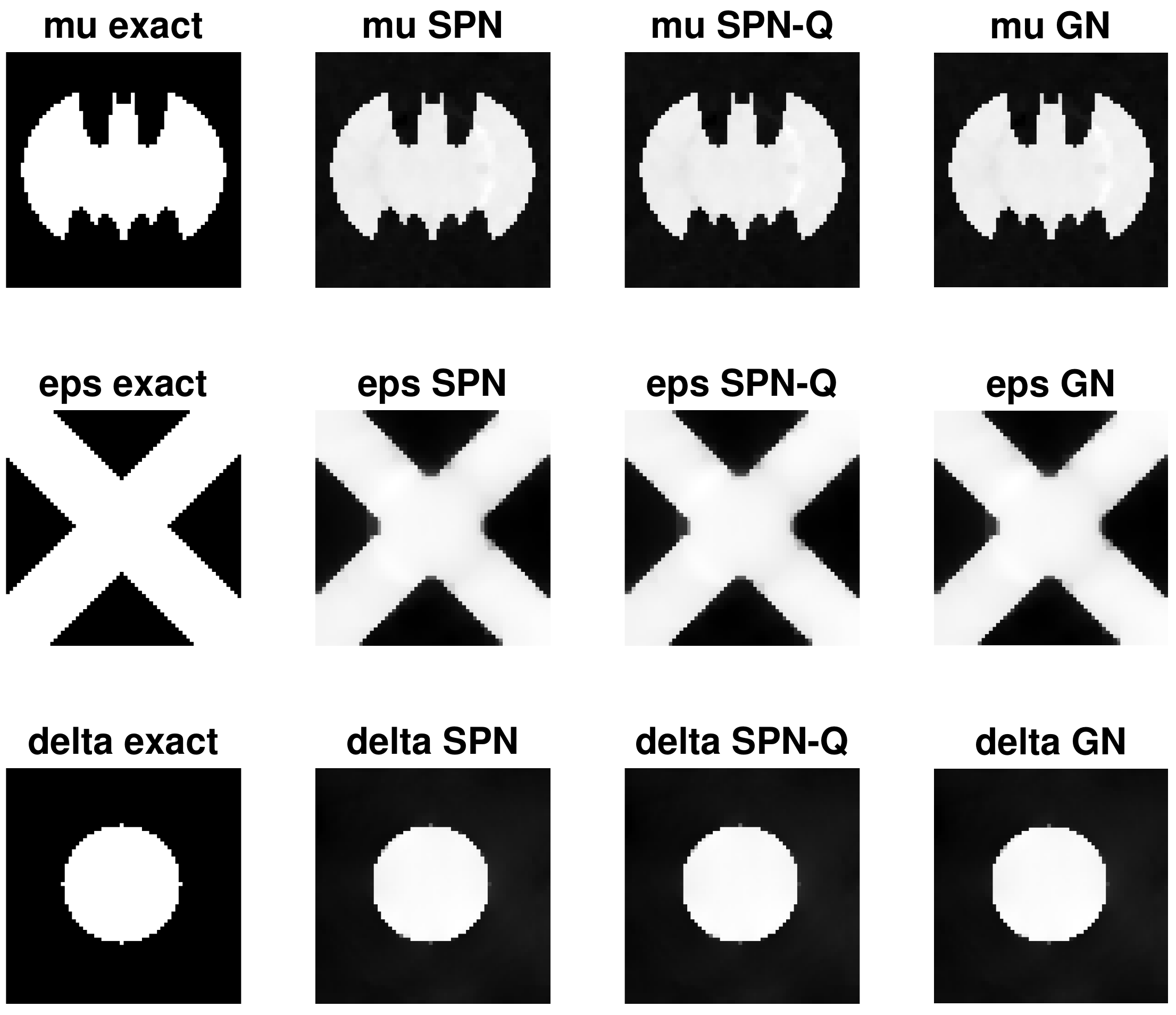}
\caption{\textbf{Experiment 5.} The exact solution used in the experiment together with the solutions obtained by \cref{alg:SPN} (SPN), \cref{alg:spnq} (SPN-Q) and \cref{alg:GN} (GN) with total variation regularization.\label{fig:experiment4sols}}
\end{center}
\end{figure}

\textbf{Experiment 5.} For the next experiment we consider total variation for the regularization function, as implemented in \cref{eq:smoothlp}. We use the exact images for $\mu_{ex},\epsilon_{ex}$ and $\delta_{ex}$ with $\texttt{npix} = 64$ as shown in \cref{fig:experiment4sols} and we take $\texttt{nangles} = 5 \times \texttt{npix}$ ($m = 20480,\,n=12228)$. For the other parameters we take tolerance $\tau = 10^{-6}$, relative noise-level $0.01$, $\zeta = 10$ and smoothing parameter $\xi = 10^{-8}$. Note that it is important that the smoothing parameter is relatively small, otherwise \cref{eq:smoothlp} is not a good approximation of the true total regularization function \cref{eq:tv}. 

To study the performance of the Sequential Projected Newton method (\cref{alg:SPN}) we also apply the two reference methods in \cref{sec:reference_methods}, namely SPN-Q (\cref{alg:spnq}) and Gauss-Newton (\cref{alg:GN}) to the same test-problem. All parameters (such as $\zeta$ or $\tau)$ for the different methods are kept the same (when applicable). The regularization parameter $\alpha$ for the Gauss-Newton method is chosen to be $1/\lambda^*$, where $\lambda^*$ is the final Lagrange multiplier computed using the Sequential Projected Newton method. In \cref{table:batm} we report the total runtime (in seconds) as well as the number of outer iterations needed to converge. We perform 5 different runs with 5 different instances of noise. It can be observed that \cref{alg:SPN} always performs best, both in number of iterations as well as runtime. Moreover it is clear that SPN-Q is not competitive compared to the other two approaches. Hence, the benefit of the more accurate sub-problem \cref{eq:subproblem} compared to the quadratic model \cref{eq:subproblem2} is clear from this experiment.

The reconstructed solutions are shown in \cref{fig:experiment4sols}.
Note that all methods (SPN, SPN-Q and GN) produce a solution of similar quality, which is not surprising since they are all essentially solving the same regularized nonlinear inverse problem using the total variation regularization function. 
We can clearly see that the reconstructed solutions are quite good and that the total variation regularization function nicely removes noise in the reconstruction, while still preserving edges in the images. 
\begin{figure}
\begin{center}
\begin{tabular}{cc}
\includegraphics[width=0.5\textwidth]{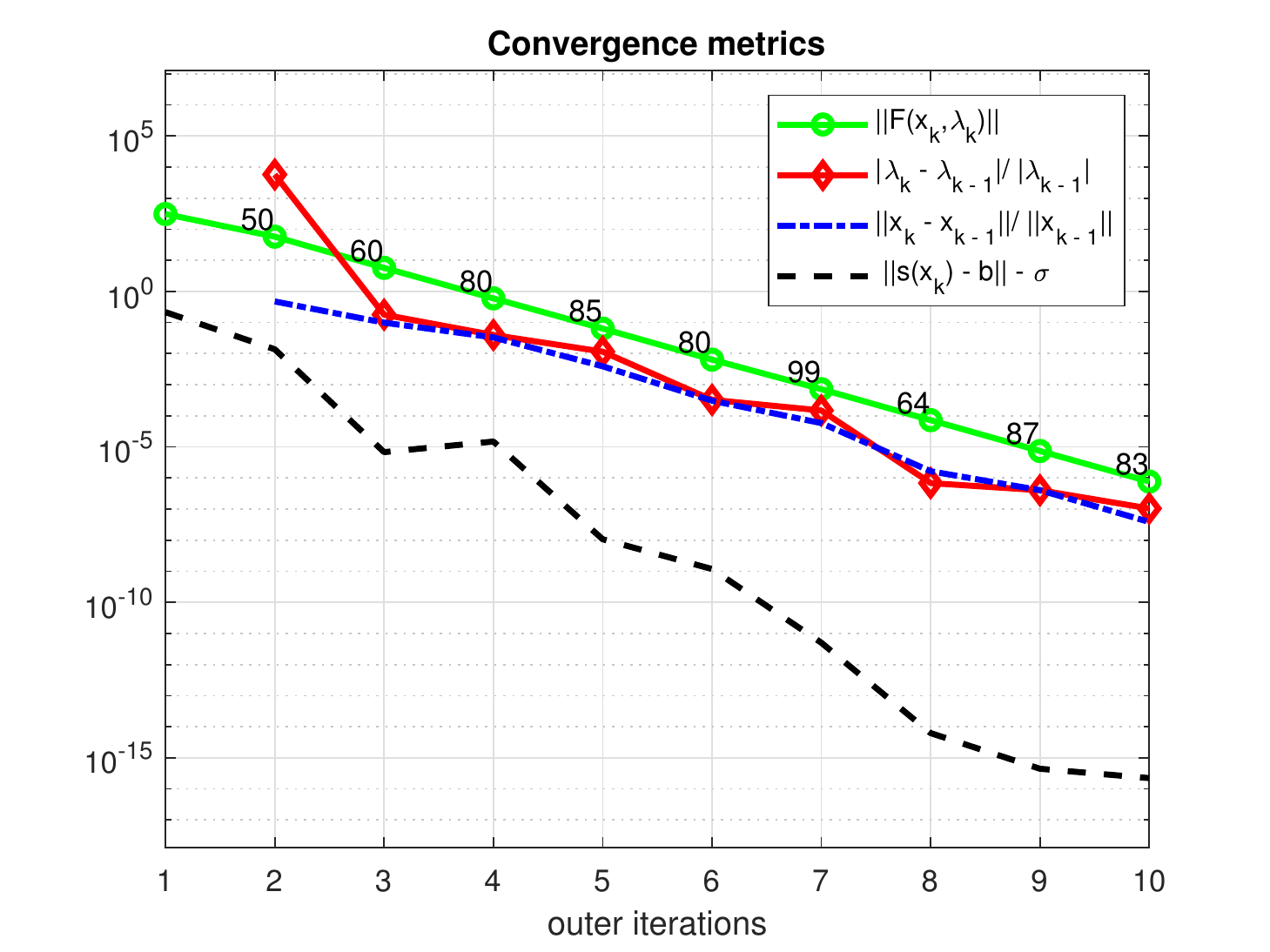} & \includegraphics[width=0.5\textwidth]{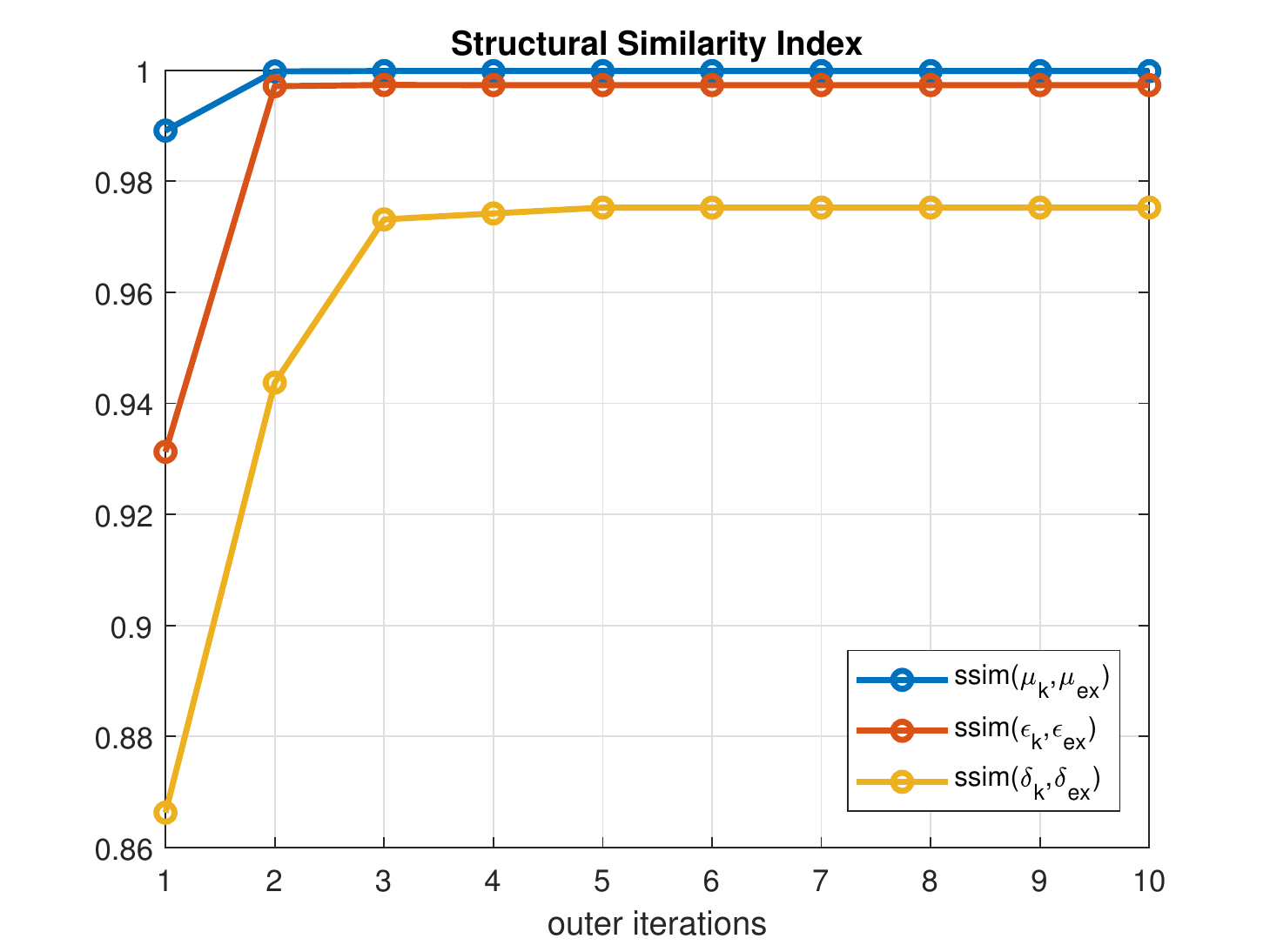} 
\end{tabular}
\caption{\textbf{Experiment 5.} (Left) Convergence metrics for the SPN method (\cref{alg:SPN}) of a Talbot-Lau experiment with $\texttt{npix} = 64$ and $\texttt{nangles} = 5 \times \texttt{npix}$. Tolerance is set to $\tau = 10^{-6}$ and noise-level 0.01. (Right) Structural Similarity index for the different components $\mu_k,\epsilon_k$ and $\delta_k$ for the same experiment. \label{fig:experiment4metrics}}
\end{center}
\end{figure}

\begin{figure}
\begin{center}
\includegraphics[width=0.75\textwidth]{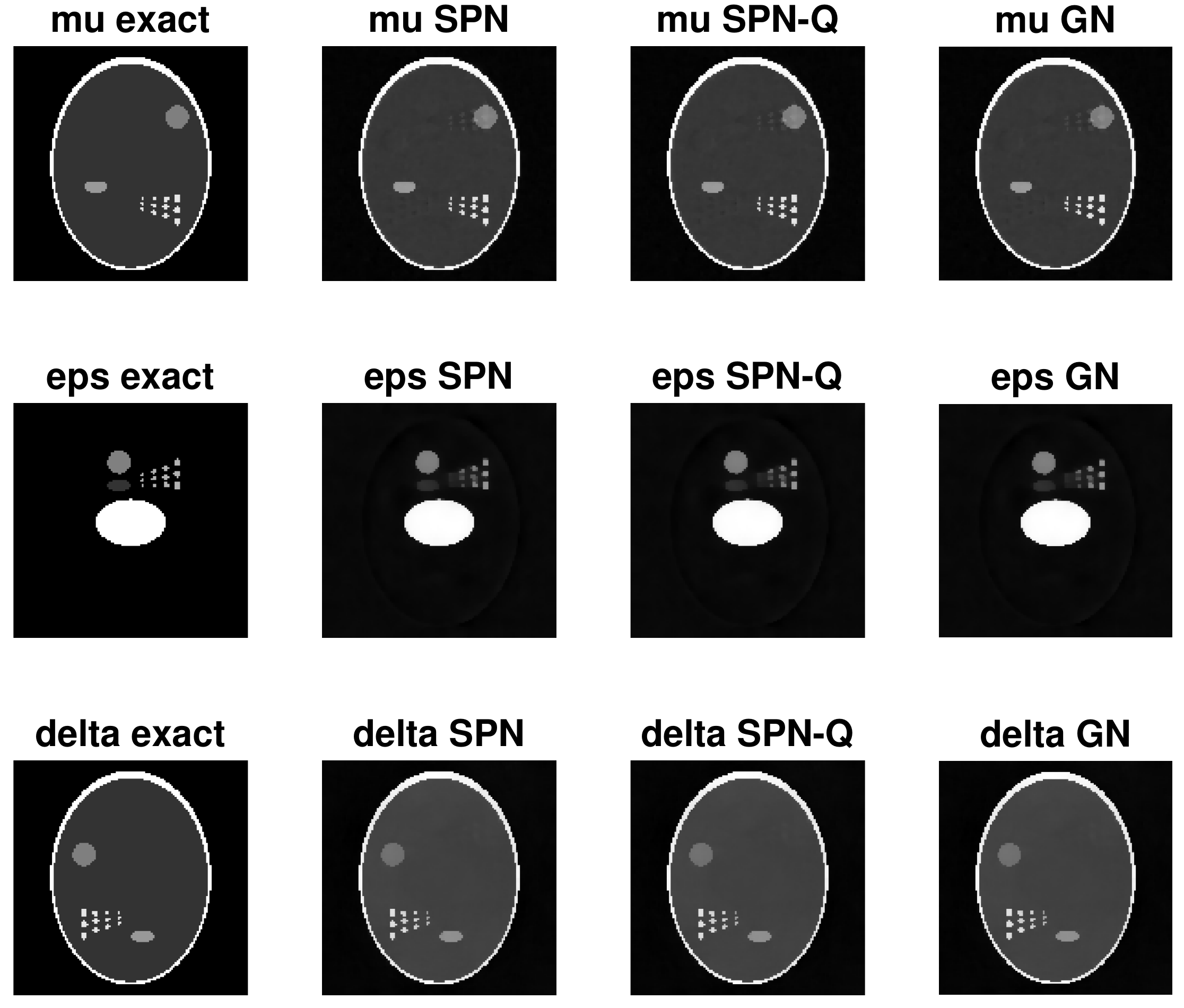}
\caption{\textbf{Experiment 6.}  The exact solution used in the experiment together with the solutions obtained by \cref{alg:SPN} (SPN), \cref{alg:spnq} (SPN-Q) and \cref{alg:GN} (GN) with total variation regularization. \label{fig:experiment5sols}}
\end{center}
\end{figure} 

\begin{figure}
\begin{center}
\begin{tabular}{cc}
\includegraphics[width=0.5\textwidth]{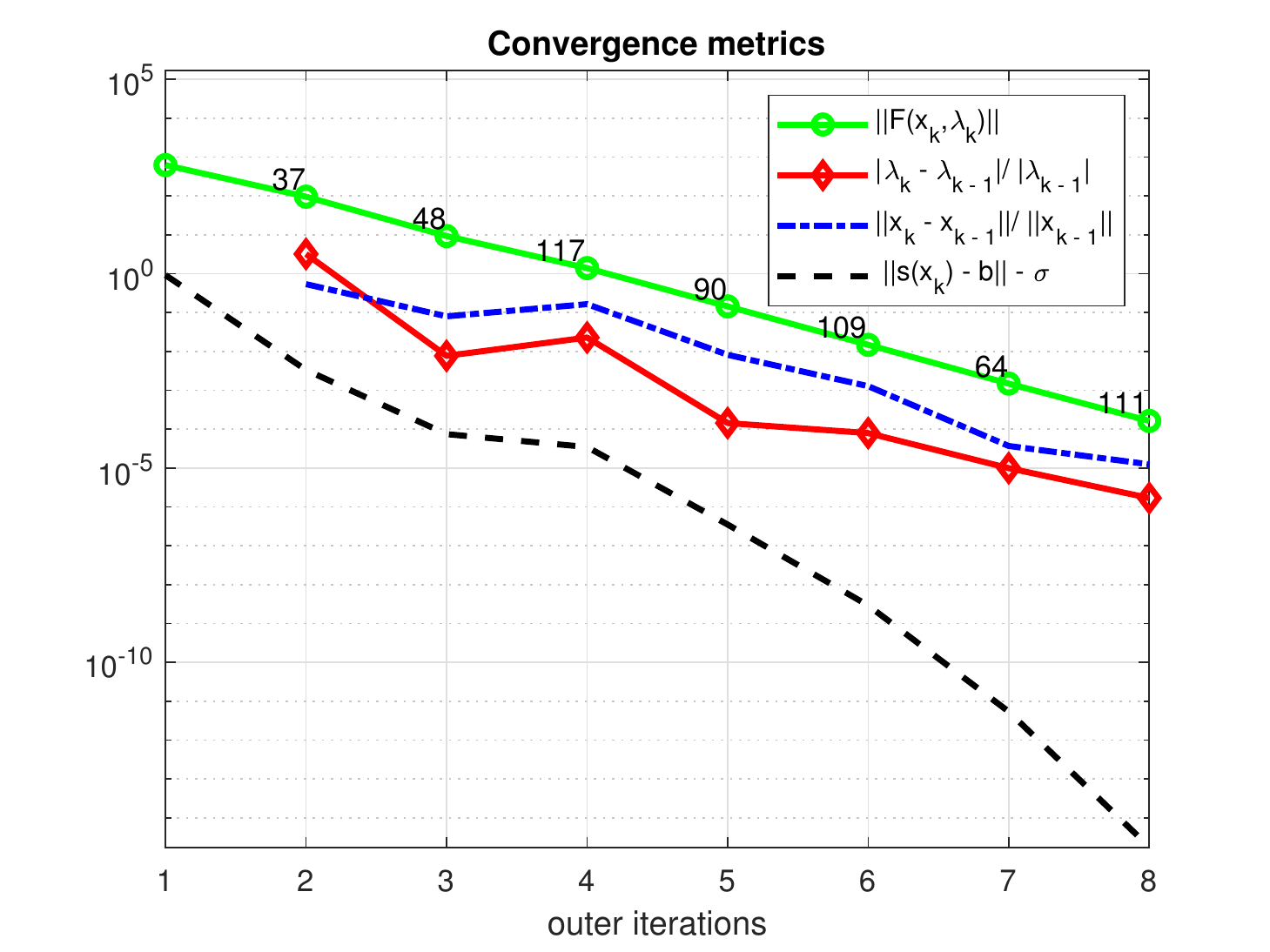} & \includegraphics[width=0.5\textwidth]{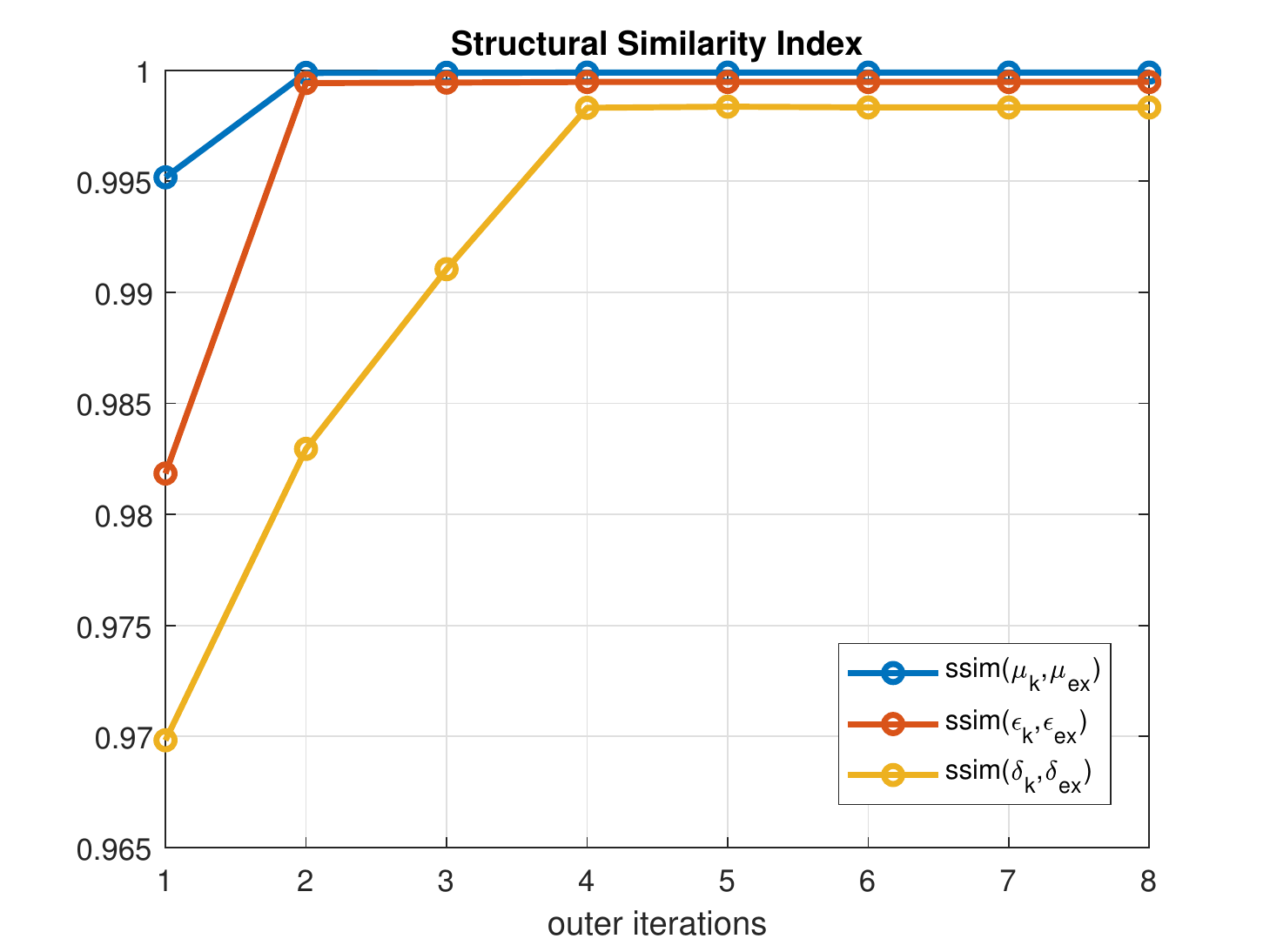} 
\end{tabular}
\caption{\textbf{Experiment 6.} (Left) Convergence metrics for the SPN method (\cref{alg:SPN}) of a Talbot-Lau experiment with $\texttt{npix} = 128$ and $\texttt{nangles} = 5 \times \texttt{npix}$. Tolerance is set to $\tau = 10^{-3}$ and noise-level 0.01. (Right) Structural Similarity index for the different components $\mu_k,\epsilon_k$ and $\delta_k$ for the same experiment. \label{fig:experiment5metrics}}
\end{center}
\end{figure}

In \cref{fig:experiment4metrics} (left) we report some different convergence metrics for the SPN method (\cref{alg:SPN}) applied to a particular instance of the test-problem. We show convergence of the values $||F(x_k,\lambda_k)||$ as always, as well as the relative difference of the iterates $x_k$ and $\lambda_k$. In addition we also show the convergence of the constraint by plotting $||s(x_k) - b || - \sigma$. These first three metrics seem to converge in a similar fashion, while the latter value converges much more quickly.

In the figure on the right, we show the Structural Similarity Index (SSIM) of the reconstructed solution compared with the exact solution. The closer this value is to $1$ the more similar the two images are. The actual formula for the SSIM is quite involved so we do not explicitely write it down here but rather refer to \cite{1284395} where it was first introduced. We compute this value using the MATLAB function \texttt{ssim}. It is interesting to see that the Structural Similarity Index stabilizes very quickly, even when $||F(x_k,\lambda_k)||$ is not yet very small. Hence, it might be appropriate for practical problem to use a stopping criterion based on $||s(x_k) - b || - \sigma$, which decreases much more quickly. Similar observations were made by the authors in \cite{cornelis2020projected}. 

\textbf{Experiment 6.} For our final experiment we use entirely the same set-up as before, but now use a larger test-problem with $\texttt{npix} = 128$ ($m = 81920,\,n = 49152$) as shown in \cref{fig:experiment5sols}. Furthermore, we now choose a moderate tolerance $\tau = 10^{-3}$. Results of this experiment can be found in \cref{fig:experiment5sols,fig:experiment5metrics}. The same conclusions as in the previous experiment can be made here. 
Results comparing \cref{alg:SPN} with Gauss-Newton are presented in \cref{table:table_phant}. We do not include the SPN-Q method in this table since a single run took more than one hour to complete. Again we can conclude that the Sequential Projected Newton method converges much more quickly than the Gauss-Newton method, while producing a solution of similar quality. 

\begin{table}
\centering
\scalebox{1}{
\begin{tabular}{ l|r | rrrrr|r}
						 & run         & 1        & 2      & 3      & 4     & 5     & Std dev      \\  \hline 
SPN          & outer its   & 8        & 8      & 8      & 8     & 8      & 0   \\ 
             & runtime     & 145.15   & 148.30 & 155.66 & 144.43& 166.80 & 9.36 \\ \hline  
Gauss-Newton & outer its   & 47       & 63     & 57     & 52    & 56     & 5.96      \\ 
             & runtime     & 451.26   & 556.58 & 570.99 & 637.92& 449.12 & 81.74
\end{tabular}}
\caption{ \textbf{Experiment 6}. The number of outer iterations and total runtime (in seconds) of the SPN method (\cref{alg:SPN}) and Gauss-Newton (\cref{alg:GN}) for 5 different runs (with different instances of noise) applied a the Tablot Lau test-problem with $\texttt{npix} = 128$. The standard deviation (Std dev) over the different runs is also reported for the different quantities.\label{table:table_phant}}
\end{table}

\section{Conclusions and outlook} \label{sec:conclusion}

In this work we have developed an algorithm for the automatic regularization of nonlinear inverse problems based on the discrepancy principle. By considering a linear approximation of the forward model using the Jacobian matrix, we obtain a quadratically constrained optimization problem which can be solved using the Projected Newton method. We prove that the solution of this equality constrained optimization problem results in a descent direction for a certain merit function, which can be used to formulate a formal line-search method for solving the inverse problem. We also formulate a slightly more heuristic version of the algorithm by loosening the tolerance of the solution of the sub-problems and removing the (seemingly unnecessary) line-search. We illustrate using the numerical experiments that these simplifications lead to a robust and computationally efficient method (in terms of runtime) for solving the regularized nonlinear inverse problem compared to two reference methods. Moreover, we show that we can obtain high quality reconstructions for Talbot-Lau X-ray phase contrast imaging using the proposed algorithm. 

Since the Sequential Projected Newton method uses the discrepancy principle, it can only be applied in case we have a good estimate of the noise-level available. It might be worth investigating if similar techniques as presented in this manuscript can also be used in combination with other parameter choice methods, such as generalized cross validation or the L-curve criterion. Moreover, we have assumed throughout this entire work that the nonlinear inverse problem is overdetermined. However, in many applications (including Tablot-Lau phase contrast imaging) this is not always the case. Hence, a generalization of the Sequential Projected Newton in case of an underdetermined inverse problem would certainly be valuable. Preliminary experiments (not reported in this work) indicate that the Sequential Projected Newton method also often converges nicely in the underdetermined case, but can also diverge on some test-problems. Hence, further research and safeguards to ensure convergence are required. 
\section*{Acknowledgments}

This work was funded in part by the IOF-SBO
project entitled ‘High performance iterative reconstruction methods for Talbot-Lau grating
interferometry based phase contrast tomography’.

\section*{References}
\bibliographystyle{unsrt}
\bibliography{refs_SPN}

\begin{thebibliography}{10}

\bibitem{nocedal2006numerical}
Jorge Nocedal and Stephen Wright.
\newblock {\em Numerical optimization}.
\newblock Springer Science \& Business Media, 2006.

\bibitem{hansen2010discrete}
Per~Christian Hansen.
\newblock {\em Discrete inverse problems: insight and algorithms}.
\newblock SIAM, 2010.

\bibitem{fukushima2003sequential}
Masao Fukushima, Zhi-Quan Luo, and Paul Tseng.
\newblock A sequential quadratically constrained quadratic programming method
  for differentiable convex minimization.
\newblock {\em SIAM Journal on Optimization}, 13(4):1098--1119, 2003.

\bibitem{hestenes1952methods}
Magnus~Rudolph Hestenes, Eduard Stiefel, et~al.
\newblock {\em Methods of conjugate gradients for solving linear systems},
  volume~49.
\newblock NBS Washington, DC, 1952.

\bibitem{dontchev2009implicit}
Asen~L Dontchev and R~Tyrrell Rockafellar.
\newblock {\em Implicit functions and solution mappings}, volume 543.
\newblock Springer, 2009.

\bibitem{fitzpatrick2009advanced}
Patrick Fitzpatrick.
\newblock {\em Advanced calculus}, volume~5.
\newblock American Mathematical Soc., 2009.

\bibitem{boggs1989strategy}
Paul~T Boggs and Jon~W Tolle.
\newblock A strategy for global convergence in a sequential quadratic
  programming algorithm.
\newblock {\em SIAM journal on Numerical Analysis}, 26(3):600--623, 1989.

\bibitem{Cornelis_2020}
Jeffrey Cornelis, Nick Schenkels, and Wim Vanroose.
\newblock Projected {N}ewton method for noise constrained {T}ikhonov
  regularization.
\newblock {\em Inverse Problems}, 36(5), mar 2020.

\bibitem{cornelis2020projected}
Jeffrey Cornelis and Wim Vanroose.
\newblock Projected {N}ewton method for noise constrained $\ell_p$
  regularization.
\newblock {\em Inverse Problems}, 36(12):125004, 2020.

\bibitem{dembo1982inexact}
Ron~S Dembo, Stanley~C Eisenstat, and Trond Steihaug.
\newblock Inexact {N}ewton methods.
\newblock {\em SIAM Journal on Numerical analysis}, 19(2):400--408, 1982.

\bibitem{eisenstat1996choosing}
Stanley~C Eisenstat and Homer~F Walker.
\newblock Choosing the forcing terms in an inexact {N}ewton method.
\newblock {\em SIAM Journal on Scientific Computing}, 17(1):16--32, 1996.

\bibitem{gomes2008globally}
M{\'a}rcia~A Gomes-Ruggiero, V{\'e}ra Lucia~Rocha Lopes, and Julia~Victoria
  Toledo-Benavides.
\newblock A globally convergent inexact {N}ewton method with a new choice for
  the forcing term.
\newblock {\em Annals of Operations Research}, 157(1):193--205, 2008.

\bibitem{haber2000optimization}
Eldad Haber, Uri~M Ascher, and Doug Oldenburg.
\newblock On optimization techniques for solving nonlinear inverse problems.
\newblock {\em Inverse problems}, 16(5):1263, 2000.

\bibitem{liesen2013krylov}
J{\"o}rg Liesen and Zdenek Strakos.
\newblock {\em Krylov subspace methods: principles and analysis}.
\newblock Oxford University Press, 2013.

\bibitem{saad2003iterative}
Yousef Saad.
\newblock {\em Iterative methods for sparse linear systems}.
\newblock SIAM, 2003.

\bibitem{goldstein2009split}
Tom Goldstein and Stanley Osher.
\newblock The split bregman method for l1-regularized problems.
\newblock {\em SIAM journal on imaging sciences}, 2(2):323--343, 2009.

\bibitem{wahlberg2012admm}
Bo~Wahlberg, Stephen Boyd, Mariette Annergren, and Yang Wang.
\newblock An {ADMM} algorithm for a class of total variation regularized
  estimation problems.
\newblock {\em IFAC Proceedings Volumes}, 45(16):83--88, 2012.

\bibitem{lampe2012large}
J{\"o}rg Lampe, Lothar Reichel, and Heinrich Voss.
\newblock Large-scale tikhonov regularization via reduction by orthogonal
  projection.
\newblock {\em Linear algebra and its applications}, 436(8):2845--2865, 2012.

\bibitem{doi:10.1137/140967982}
A.~Lanza, S.~Morigi, L.~Reichel, and F.~Sgallari.
\newblock A {G}eneralized {K}rylov {S}ubspace {M}ethod for $\ell_p$-$\ell_q$
  {M}inimization.
\newblock {\em SIAM Journal on Scientific Computing}, 37(5):S30--S50, 2015.

\bibitem{GAZZOLA2014180}
Silvia Gazzola and Paolo Novati.
\newblock Automatic parameter setting for {A}rnoldi-{T}ikhonov methods.
\newblock {\em Journal of Computational and Applied Mathematics}, 256:180 --
  195, 2014.

\bibitem{4380459}
B.~{Wohlberg} and P.~{Rodriguez}.
\newblock An {I}teratively {R}eweighted norm algorithm for minimization of
  {T}otal {V}ariation functionals.
\newblock {\em IEEE Signal Processing Letters}, 14(12):948--951, Dec 2007.

\bibitem{rodriguez2008efficient}
Paul Rodriguez and Brendt Wohlberg.
\newblock An efficient algorithm for sparse representations with $\ell_p$ data
  fidelity term.
\newblock In {\em Proceedings of 4th IEEE Andean Technical Conference
  (ANDESCON)}, 2008.

\bibitem{doi:10.1137/18M1194456}
Julianne Chung and Silvia Gazzola.
\newblock Flexible {K}rylov methods for $\ell_p$ regularization.
\newblock {\em SIAM Journal on Scientific Computing}, 41(5):S149--S171, 2019.

\bibitem{doi:10.1137/130917673}
Silvia Gazzola and James~G. Nagy.
\newblock Generalized {A}rnoldi--{T}ikhonov method for sparse reconstruction.
\newblock {\em SIAM Journal on Scientific Computing}, 36(2):B225--B247, 2014.

\bibitem{von2017grating}
Maximilian von Teuffenbach, Thomas Koehler, Andreas Fehringer, Manuel Viermetz,
  Bernhard Brendel, Julia Herzen, Roland Proksa, Ernst~J Rummeny, Franz
  Pfeiffer, and Peter~B No{\"e}l.
\newblock Grating-based phase-contrast and dark-field computed tomography: a
  single-shot method.
\newblock {\em Scientific reports}, 7(1):1--8, 2017.

\bibitem{van2015astra}
Wim Van~Aarle, Willem~Jan Palenstijn, Jan De~Beenhouwer, Thomas Altantzis, Sara
  Bals, K~Joost Batenburg, and Jan Sijbers.
\newblock The {ASTRA} toolbox: A platform for advanced algorithm development in
  electron tomography.
\newblock {\em Ultramicroscopy}, 157:35--47, 2015.

\bibitem{van2016fast}
Wim Van~Aarle, Willem~Jan Palenstijn, Jeroen Cant, Eline Janssens, Folkert
  Bleichrodt, Andrei Dabravolski, Jan De~Beenhouwer, K~Joost Batenburg, and Jan
  Sijbers.
\newblock Fast and flexible x-ray tomography using the {ASTRA} toolbox.
\newblock {\em Optics express}, 24(22):25129--25147, 2016.

\bibitem{1284395}
Zhou Wang, A.C. Bovik, H.R. Sheikh, and E.P. Simoncelli.
\newblock Image quality assessment: from error visibility to structural
  similarity.
\newblock {\em IEEE Transactions on Image Processing}, 13(4):600--612, 2004.

\end{thebibliography}

\newpage
\appendix

 \section{MATLAB code for generating Talbot-Lau test-problem}\label{sec:matlab}

The following MATLAB code is based on the ASTRA toolbox \cite{van2015astra,van2016fast}. 
\vspace{-0.3cm}
\begin{lstlisting}
function [A,D,phi0] = generate_testproblem(npix,nangles,nstep)
% Matlab function to generate Talbot-Lau test-problem. 
% Input : - npix = number of pixels in each direction
%         - nangles = number of projection angles
%         - nstep = number of phase steps
% 
% Output: - m x n matrix A, with m = npix*nangles and n = npix^2
%         - m x m finite difference matrix D 
%         - m x 1 vector phi0: phase shift without the object
angles = linspace2(0,pi,nangles);
vol_geom = astra_create_vol_geom(npix,npix);
proj_geom = astra_create_proj_geom('parallel', 1.0, npix, angles);
proj_id = astra_create_projector('line', proj_geom, vol_geom);
matrix_id = astra_mex_projector('matrix', proj_id);
A = astra_mex_matrix('get', matrix_id); 
astra_mex_projector('delete', proj_id);
astra_mex_matrix('delete', matrix_id);

D1 = spdiags([-ones(npix,1), + ones(npix,1)],[0 1], npix,npix); 
I = speye(nangles);  D = kron(I,D1); 

phi_list = mod(0:(nangles - 1),nstep) + 1; %Index of the phase step
PHI_0 = linspace2(0,pi,nstep); PHIArray = zeros([npix nangles]);

for ii=1:npix, PHIArray(ii,:)= phi_list; end

PHIArray = PHIArray(:); phi0 = 0*PHIArray; 

for ii=1:length(phi0), phi0(ii) = PHI_0(PHIArray(ii)); end

end
\end{lstlisting}

\end{document}